\numberwithin{equation}{section}
\newtheorem{theorem}{Theorem}[section]
\newtheorem{lemma}[theorem]{Lemma}
\newtheorem{proposition}[theorem]{Proposition}
\newtheorem{corollary}[theorem]{Corollary}
\theoremstyle{definition}
\newtheorem{definition}[theorem]{Definition}
\newcommand\Ass{\operatorname{Ass}}
\newcommand\Tor{\operatorname{Tor}}
\newcommand\Hom{\operatorname{Hom}}
\newcommand\Ext{\operatorname{Ext}}
\newcommand\depth{\operatorname{depth}}
\newcommand\reg{\operatorname{reg}}
\begin{document}
\title[Binomial edge ideal of a complete bipartite graph]{Algebraic properties of the
binomial edge ideal of a complete bipartite graph}
\author{Peter Schenzel, Sohail Zafar}
\address{Martin-Luther-Universit\"at Halle-Wittenberg,
Institut f\"ur Informatik, D --- 06 099 Halle (Saale),
Germany}

\email{peter.schenzel@informatik.uni-halle.de}

\address{Abdus Salam School of Mathematical Sciences, GCU, Lahore Pakistan}
\email{sohailahmad04@gmail.com}
\thanks{This research was partially supported by the Higher Education
Commission, Pakistan}
\subjclass[2010]{05E40, 13H10, 13D45}
\keywords{binomail edge ideal, complete bipartite graph, pure resolution}

\begin{abstract}
Let $J_G$ denote the binomial edge ideal of a connected undirected graph on $n$ vertices. This is the ideal
generated by the binomials $x_iy_j - x_jy_i, 1\leq i < j \leq n,$ in the polynomial ring $S= K[x_1,\ldots,x_n,y_1,\ldots,y_n]$
where $\{i,j\}$ is an edge of $G$. We study the arithmetic properties of $S/J_G$ for $G$, the complete bipartite
graph.  In particular we compute dimensions, depths, Castelnuovo-Mumford regularities, Hilbert functions
and multiplicities of them.  As main results we give an explicit description of the modules of deficiencies,
the duals of local cohomology modules, and prove the purity of the minimal free resolution of $S/J_G$.
\end{abstract}


\maketitle
\section{Introduction} The main intention of the present paper is the study of
the binomial edge ideal of the complete bipartite graph. Let $G$ denote a connected undirected
graph on $n$ vertices labeled by $[n] = \{1,2,\ldots,n\}$.  For an arbitrary field $K$ let
$S = K[x_1,\ldots,x_n,y_1,\ldots,y_n]$ denote the polynomial ring in $2n$ variables. To the graph
$G$ one can associate the binomial edge ideal $J_G \subset S$ generated by binomials
$x_iy_j-x_jy_i, i < j,$ such that $\{i,j\}$ is an edge of $G$. This is an extension of the edge ideal (generated
by the monomials) as it was studied for instance in \cite{Vi}.  The binomial edge ideals
of  a graph $G$ might have applications in algebraic statistics (see \cite{HKR}). By the work of
Herzog et al. (see \cite{HKR}) the minimal primary decomposition of $J_G$ is known. Besides of that not so much
is known about the arithmetic properties of $S/J_G$. If $G$ denotes the complete graph on $n$ vertices, then
$S/J_G$ is the coordinate ring of the Segre embedding $\mathbb{P}^1_K \times \mathbb{P}^n_K$. This
is a variety of minimal degree. Therefore $S/J_G$ is a Cohen-Macaulay ring with a linear
resolution.  In the paper Ene, Herzog and Hibi (see \cite{EHH}) they studied Cohen-Macaulayness property for some special classes of graphs. By view of the primary
decomposition of $J_G$ (see \cite{HKR}) it follows that $S/J_G$ is not so often a Cohen-Macaulay
ring. As a certain generalization of the Cohen-Macaulay property the second author has studied
approximately Cohen-Macaulay rings (see \cite{So}). In the present paper we investigate the
binomial edge ideal of another important class of graphs, namely the complete bipartite
graph $G = K_{m,n}$ (see the definitions in Section 3).

As the main result of our investigations we prove (among others) the following results:

\begin{theorem} With the previous notation let $J_G \subset S$ denote the binomial edge ideal
associated to the complete bipartite graph $G = K_{m,n}$.
\begin{itemize}
\item[(a)] $\dim S/J_G = \max \{ n+m+1, 2m\}$ and
 \begin{eqnarray*}
\depth S/J_G=\left\{\begin{array}{ll}
 m+2, & \hbox{if \, $n=1$,} \\
n+2, & \hbox{if \, $m\geq n>1.$}
 \end{array}\right.
\end{eqnarray*}
\item[(b)] There is an explicit expression of the Hilbert series and the multiplicity equals
\[
e(S/J_G) = \begin{cases} 1, & \textrm{ if  } m > n+1 \textrm{ or } n=1 \textrm{ and } m>2,\\
                                     2m,  & \textrm{ otherwise. }
                 \end{cases}
\]
\item[(c)] The Castelnuovo-Mumford regularity is $\reg S/J_G = 2$ and $S/J_G$ admits a pure minimal
free resolution.
\item[(d)] There are at most 5 non-vanishing local cohomology modules $H^i_{S_+}(S/J_G)$. The modules of
deficiencies $\omega^i(S/J_G) = \Hom_K(H^i_{S_+}(S/J_G),K)$ are either Cohen-Macaulay modules
or the direct sum of two Cohen-Macaulay modules.
\item[(e)] $S/J_G$ is a Cohen-Macaulay canonically ring in the sense of \cite{Sch2}.
\end{itemize}
\end{theorem}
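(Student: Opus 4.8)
The plan is to establish all five assertions by first computing an explicit primary decomposition of $J_G$ for $G = K_{m,n}$, then extracting the arithmetic invariants in parts (a)–(c), and finally using local duality together with the structure of the decomposition to obtain (d) and (e). My starting point is the description of the minimal primes of a binomial edge ideal due to Herzog et al.\ (\cite{HKR}). For the complete bipartite graph the combinatorics of the sets $S\subseteq[n+m]$ that index these primes is particularly simple: because every vertex on one side is adjacent to every vertex on the other, the admissible sets are severely constrained, and I expect exactly two kinds of associated primes — one of "minimal" height coming from $S=\emptyset$ (the prime $P_\emptyset$ of $2\times 2$ minors of a generic $2\times(n+m)$-type matrix, giving the Segre-type component of dimension $n+m+1$) and one or more "large" primes built from separating one entire side, contributing the dimension $2m$ term. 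The displayed formula $\dim S/J_G = \max\{n+m+1, 2m\}$ is precisely the maximum over these two families, which is the first thing I would verify.

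Once the decomposition is in hand, I would treat the two regimes $n=1$ and $m\ge n>1$ separately, since the depth formula already distinguishes them. For the dimension in (a) I take the maximum of the dimensions of the primary components; for the depth I would set up the short exact sequence
\begin{equation*}
0 \longrightarrow S/J_G \longrightarrow S/P_1 \oplus S/P_2 \longrightarrow S/(P_1+P_2) \longrightarrow 0,
\end{equation*}
coming from writing $J_G$ as an intersection $P_1\cap P_2$ of (sums of) the two types of primes, and read off $\depth$ via the resulting long exact sequence of local cohomology. Here each $S/P_i$ is a determinantal-type (Cohen–Macaulay) ring whose depth I can compute directly, and the intersection sum $P_1+P_2$ corresponds to a smaller, well-understood ideal; the depth lemma then pins down $\depth S/J_G$ as one more than the depth of the last term, yielding $m+2$ or $n+2$. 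The Hilbert series and multiplicity in (b) follow from the same short exact sequence by additivity of Hilbert series, $H_{S/J_G} = H_{S/P_1} + H_{S/P_2} - H_{S/(P_1+P_2)}$, after which $e(S/J_G)$ is computed from the pole order at $t=1$: the multiplicity is $1$ exactly when a single component of top dimension dominates, and $2m$ when the $2m$-dimensional component ties or the two large components each contribute degree $m$.

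For (c) I would show $\reg S/J_G = 2$ by bounding the regularity of the three terms in the short exact sequence (the Segre component has a linear, hence regularity-one, resolution, and the other terms are controlled similarly), then argue the minimal free resolution of $S/J_G$ is \emph{pure} by exhibiting that in each homological degree the graded Betti numbers are concentrated in a single internal degree — this I expect to read off from the shape forced by $\reg = 2$ together with the explicit Betti numbers of the determinantal pieces, possibly via an iterated mapping cone on the above sequence. Parts (d) and (e) then come from local duality: the modules of deficiencies $\omega^i(S/J_G) = \Hom_K(H^i_{S_+}(S/J_G),K)$ are computed as $\Ext$ modules against the canonical module of $S$, and the additive structure of the decomposition shows each $H^i$ is supported in at most the finitely many cohomological degrees between $\depth$ and $\dim$, bounding their number by $5$; that each deficiency module is Cohen–Macaulay or a sum of two such follows because the $\Ext$ contributions come from the individual Cohen–Macaulay components $S/P_i$. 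The Cohen–Macaulay-canonical property (e) in the sense of \cite{Sch2} is then a formal consequence of the deficiency modules being Cohen–Macaulay of the expected dimensions. \textbf{The main obstacle} I anticipate is not any single computation but rather pinning down the exact primary decomposition — in particular verifying that the associated primes are exactly the two families I expect and identifying the embedded versus minimal components correctly, since a miscount there would corrupt every subsequent invariant; I would guard against this by cross-checking the dimension and multiplicity formulas in the boundary cases $n=1$ and $m=n$ against direct computation in small examples.
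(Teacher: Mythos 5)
Your overall strategy --- the primary decomposition from \cite{HKR}, Mayer--Vietoris-type exact sequences, additivity of Hilbert series, and local duality --- is the same as the paper's, and your plan for parts (a) and (b) would go through essentially as the paper does it (one caveat: for $n\geq 2$ the decomposition has \emph{three} components, $J_G=J_{\tilde{G}}\cap A_n\cap B_m$, and the identification of the "sum" term in your exact sequence, namely $(J_{\tilde{G}}\cap A_n)+(J_{\tilde{G}}\cap B_m)=J_{\tilde{G}}$, is not automatic: the paper proves it via a regular-sequence argument, Proposition \ref{2.3}; also your heuristic "multiplicity $1$ exactly when a single top-dimensional component dominates" misfires for $m=n$, where the Segre component dominates alone but has degree $2m$). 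The genuine gaps are in (c) and (d).

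For the purity in (c): regularity $2$ does \emph{not} force purity; a priori the Betti table could have $\beta_{2,1}\neq 0$ and $\beta_{2,2}\neq 0$ simultaneously. The entire content (the paper calls it the heart of its investigation, Theorem \ref{5.2}) is the exact vanishing $\beta_{2,1}(S/J_G)=0$, and this cannot be "read off" from an iterated mapping cone: mapping cones only bound Betti numbers from above, and in the sequence $0\to S/J_G\to S/J_{\tilde{G}}\cap A_n\oplus S/J_{\tilde{G}}\cap B_m\to S/J_{\tilde{G}}\to 0$ the module $S/J_G$ is the \emph{sub}module, so a mapping cone does not even produce a resolution of it. The paper instead chases the graded long exact sequence of $\Tor$, and this requires three inputs your plan lacks: (i) that $J_{\tilde{G}}\cap A_n$ and $J_{\tilde{G}}\cap B_m$ are themselves binomial edge ideals, hence quadric-generated, so their $\beta_{1,2}$ vanish (Lemma \ref{5.3}); (ii) the Betti numbers of complete-graph ideals together with their Koszul twists, $b_i(r)=i\binom{r}{i+1}$ (Lemma \ref{5.1}); and (iii) the final cancellation $b_2(m+n)=b_2(m)+2n\,b_1(m)+b_2(n)+2m\,b_1(n)$ (a Vandermonde identity), which is what makes the count exactly zero rather than merely small.

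For (d): your argument that each deficiency module is Cohen--Macaulay "because the $\Ext$ contributions come from the individual Cohen--Macaulay components" fails precisely where the work lies. The long exact sequences produce \emph{extensions} of Cohen--Macaulay modules of different dimensions, e.g.\ for $n=1$: $0\to\omega^{m+2}(S/J_{\tilde{G}})\to\omega^{m+2}(S/J_G)\to\omega^{m+1}(S/(J_{\tilde{G}},A_1))\to 0$, which only yields $\depth\,\omega^{m+2}(S/J_G)\geq m+1$, not Cohen--Macaulayness. The paper's Theorems \ref{4.1}--\ref{4.4} close this by a double-dualization argument: dualize again, identify the induced map with the natural surjection $S/J_{\tilde{G}}\to S/(J_{\tilde{G}},A_1)$ via the $S_2$-fication isomorphisms of Proposition \ref{2.5}, and conclude $\omega^{m+1}(\omega^{m+2}(S/J_G))=0$. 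Your proposal has no mechanism for this. Likewise, the bound "at most $5$" does not come from counting degrees between $\depth$ and $\dim$ (that interval contains about $2m-n$ integers); it comes from the five Cohen--Macaulay pieces $S/J_{\tilde{G}},\,S/A_n,\,S/B_m,\,S/(J_{\tilde{G}},A_n),\,S/(J_{\tilde{G}},B_m)$, whose cohomologies are concentrated in the single degrees $m+n+1,\,2m,\,2n,\,m+1,\,n+1$. Finally, the dichotomy in (d) (Cohen--Macaulay versus a direct sum of two Cohen--Macaulay modules) requires the case analysis over coincidences among $\{n+2,\,m+2,\,2n,\,m+n+1,\,2m\}$ that the paper carries out in Theorems \ref{4.1}, \ref{4.2}, \ref{4.3}, \ref{4.x}, \ref{4.4}: the direct-sum phenomena occur exactly in the degenerate cases $m=n+1$ and $2n=m+2$, which your proposal never isolates; in particular for $2n=m+2$ the module $\omega^{m+2}(S/J_G)\cong\omega^{m+1}(S/(J_{\tilde{G}},A_n))\oplus S/B_m(-2n)$ is genuinely not Cohen--Macaulay, so any argument purporting to show all deficiency modules are Cohen--Macaulay would be proving something false. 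Part (e) is indeed a formal consequence once the correct structure in (d) is established.
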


For the details on the modules of deficiencies we refer to Section 4 of the paper.
This is - at least for us - the first time in the literature that there is a complete description of the
structure of the modules of deficiencies besides of sequentially Cohen-Macaulay rings or
Buchsbaum rings. Our analysis is based on the primary decomposition of $J_G$ as shown
in \cite{HKR}.

In Section 2 we start with preliminary and auxiliary results needed in the rest of the paper.
In particular we give a short overview on the modules of deficiencies. In Section 3 we
study some of the properties of the the binomial edge ideal $J_G \subset S$  associated
to a complete bipartite graph. In Section 4 we give a complete list of all the modules of deficiencies
of the complete bipartite graphs. In the final Section 5 we prove the purity of the minimal
free resolution of $S/J_G$. This is the heart of our investigations. It gives in a natural way
some non-Cohen-Macaulay rings with pure resolutions. We might relate our investigations
as a better understanding of general binomial edge ideals.

\section{Preliminaries and auxiliary results}
First of all we will introduce the notation used in the sequel. Moreover we
summarize a few auxiliary results that we need.

We denote by $G$ a connected undirected graph on $n$ vertices labeled by
$[n] = \{1,2,\ldots,n\}$. For an arbitrary field $K$ let $S = K[x_1,\dots,x_n,y_1,\dots,y_n]$
denote the polynomial ring in the $2n$ variables $x_1,\dots,x_n,y_1,\dots,y_n$. To the
graph $G$ one can associate an ideal $J_G \subset S$ generated by all binomials
$x_iy_j-x_jy_i$ for all $1 \leq i < j \leq n$ such that $\{i,j\}$ is an edge of $G$.
This construction was invented by Herzog et al. in \cite{HKR} and \cite{EHH}. At first let us
recall some of their definitions.

\begin{definition} \label{2.1} Fix the previous notation. For a set $T \subset [n]$
let $\tilde{G}_T$ denote the complete graph on the vertex set $T$. Moreover
let $G_{[n]\setminus{T}}$ denote the graph obtained by deleting all vertices
of $G$ that belong to $T$.

Let $c = c(T)$ denote the number of connected components of $G_{[n]\setminus{T}}$.
Let $G_1,\ldots,G_c$ denote the connected components of $G_{[n]\setminus{T}}$. Then define
\[
P_{T}(G)=(\cup _{i\in T}\{x_{i},y_{i}\},J_{\tilde{G}_{1}},\dots,J_{\tilde{G}%
_{C(T)}}),
\]
where $\tilde{G}_i, i =1,\ldots,c,$ denotes the complete graph on the vertex set
of the connected component $G_i, i = 1,\ldots,c$.
\end{definition}

The following result is important for the understanding of the binomial edge ideal
of $G$.

\begin{lemma} \label{2.2}
With the previous notation the following holds:
\begin{itemize}
\item[(a)] $P_{T}(G)\subset S$
is a prime ideal of height $n-c+|T|,$ where $|T|$ denotes the number of
elements of $T$.
\item[(b)] $J_{G}=\cap _{T \subseteq [n]}P_{T}(G).$
\item[(c)] $J_{G}\subset P_{T}(G)$ is a
minimal prime if and only if either  $T=\emptyset $ \ or $T\neq \emptyset $
and $c(T \setminus \{i\})<c(T)$  each $i \in T$.
\end{itemize}
\end{lemma}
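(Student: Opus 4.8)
The plan is to treat the three parts in the order (a), (b), (c), since (a) supplies the height formula used throughout and (b) sets up the decomposition whose minimal members are isolated in (c). For (a) I would exploit that the generators of $P_T(G)$ are distributed among pairwise disjoint sets of variables: the variables $x_i,y_i$ with $i\in T$, and, for each connected component $G_k$ of $G_{[n]\setminus T}$, the binomials generating $J_{\tilde G_k}$ in the variables indexed by the vertices of $G_k$. Consequently $S/P_T(G)$ is isomorphic to a tensor product over $K$ of the coordinate rings $R_k=K[x_j,y_j:j\in G_k]/J_{\tilde G_k}$. Each $J_{\tilde G_k}$ is the ideal of $2\times 2$ minors of a generic $2\times n_k$ matrix (with $n_k=|G_k|$), hence a prime defining the Segre variety $\mathbb{P}^1\times\mathbb{P}^{n_k-1}$ of height $n_k-1$; in particular each $R_k$ is a geometrically integral $K$-algebra, so the tensor product is a domain and $P_T(G)$ is prime. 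Summing heights over the disjoint blocks gives $\operatorname{ht}P_T(G)=2|T|+\sum_k(n_k-1)=2|T|+(n-|T|)-c=n+|T|-c$, using $\sum_k n_k=n-|T|$.

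For (b), the inclusion $J_G\subseteq P_T(G)$ for every $T$ is checked on the generators $f_{ij}=x_iy_j-x_jy_i$: if $i\in T$ (resp. $j\in T$) then $f_{ij}\in(x_i,y_i)$ (resp. $(x_j,y_j)$) $\subseteq P_T(G)$, while if $\{i,j\}$ is an edge with $i,j\notin T$ then $i,j$ lie in a common component $G_k$ and $f_{ij}\in J_{\tilde G_k}$. The reverse inclusion $\bigcap_T P_T(G)\subseteq J_G$ is the crux. Here I would first prove that $J_G$ is a radical ideal---by exhibiting a monomial order under which $\operatorname{in}_<(J_G)$ is squarefree, which is the Gr\"obner-basis analysis of the binomials attached to admissible paths in $G$---and simultaneously that every minimal prime of $J_G$ has the form $P_T(G)$. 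Granting this, radicality gives $J_G=\bigcap_{\mathfrak p\in\Min J_G}\mathfrak p$, and since each such $\mathfrak p$ is some $P_T(G)$ while conversely $J_G\subseteq P_T(G)$ for all $T$, the intersection over the minimal $T$ coincides with the intersection over all $T$, yielding (b).

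Part (c) is then a combinatorial sieve among the $P_T(G)$. The key observation is the effect of deleting a single $i\in T$: if $i$ is adjacent, in $G_{[n]\setminus(T\setminus\{i\})}$, to vertices of $k$ distinct components of $G_{[n]\setminus T}$, then $c(T\setminus\{i\})=c(T)-k+1$. When $k\le 1$, i.e. $c(T\setminus\{i\})\ge c(T)$, one checks directly on generators that $P_{T\setminus\{i\}}(G)\subseteq P_T(G)$: reinserting $i$ either creates an isolated component contributing nothing, or enlarges a single complete graph whose additional minors $x_iy_b-x_by_i$ already lie in $(x_i,y_i)\subseteq P_T(G)$. The height formula from (a) shows this inclusion is strict, so $P_T(G)$ is not minimal; hence minimality forces $c(T\setminus\{i\})<c(T)$ for every $i\in T$. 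For the converse I would argue that once this cut-vertex condition holds no $P_{T'}(G)$ can sit strictly inside $P_T(G)$, again comparing heights and tracking which variables and minors are forced, so that $P_T(G)$ is indeed minimal; and $P_\emptyset(G)=J_{\tilde G}$ is always the smallest member. The main obstacle throughout is the radicality and minimal-primes step in (b), which is where the genuine input---the Gr\"obner-basis structure of the binomial edge ideal---enters, whereas the remaining arguments are bookkeeping with disjoint variable sets and the height formula.
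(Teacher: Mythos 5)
First, a point of comparison: the paper does not prove Lemma \ref{2.2} at all --- its ``proof'' is a citation to \cite{HKR} --- so your attempt has to be measured against the argument in that reference, whose structure your outline in fact follows. Your part (a) is correct and complete: the generators of $P_T(G)$ occupy pairwise disjoint sets of variables, each determinantal factor is absolutely prime (it defines the Segre variety $\mathbb{P}^1\times\mathbb{P}^{n_k-1}$, and primality of generic $2\times 2$ minors holds over any field), so the tensor product is a domain and the heights add to $2|T|+\sum_k(n_k-1)=n-c+|T|$. The non-minimality direction of (c) is also complete: the merging formula $c(T\setminus\{i\})=c(T)-k+1$, the generator-by-generator verification that $P_{T\setminus\{i\}}(G)\subseteq P_T(G)$ when $k\le 1$, and the height count give a strict inclusion of primes containing $J_G$.

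The genuine gap is in (b), and it is not quite where you locate it. You reduce the hard inclusion to two claims: (i) $J_G$ is radical, and (ii) every minimal prime of $J_G$ is of the form $P_T(G)$, and you assert that both come ``simultaneously'' out of the Gr\"obner-basis analysis. Claim (i) is indeed exactly the squarefree-initial-ideal theorem for admissible paths and may legitimately be quoted; but claim (ii) does not fall out of that analysis, and you never indicate how to prove it. It needs its own argument, for instance the geometric one: over $\overline{K}$, given a point $(a,b)\in V(J_G)$, set $T=\{i: a_i=b_i=0\}$; along each edge $\{i,j\}$ of $G$ avoiding $T$ the nonzero vectors $(a_i,b_i)$ and $(a_j,b_j)$ are proportional, proportionality propagates along paths of $G_{[n]\setminus T}$, hence all $2\times 2$ minors on each connected component vanish and the point lies in $V(P_T(G))$; radicality then converts $V(J_G)=\bigcup_T V(P_T(G))$ into the ideal-theoretic equality. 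Without this step (or an equivalent localization/path-induction argument) part (b) is unproven, and the converse of (c) --- which quantifies only over the family $\{P_{T'}(G)\}$ --- silently relies on (b) to know these are the only candidate primes inside $P_T(G)$. Two smaller repairs: your remark that $P_\emptyset(G)=J_{\tilde{G}}$ is ``always the smallest member'' is false as stated, since $J_{\tilde{G}}\not\subseteq P_T(G)$ whenever $G_{[n]\setminus T}$ is disconnected (a minor joining two components is nonzero in the tensor-product domain); what is true, and suffices for its minimality, is that no $P_{T'}(G)$ with $T'\neq\emptyset$ fits inside $J_{\tilde{G}}$ because $S/J_{\tilde{G}}$ is a domain in which no variable vanishes. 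Likewise, in the converse of (c) ``comparing heights'' proves nothing; the working argument is that $P_{T'}(G)\subseteq P_T(G)$ forces $T'\subseteq T$ (no variable $x_i$, $i\notin T$, lies in $P_T(G)$), and then any $i\in T\setminus T'$ has neighbours $a,b$ in two distinct components of $G_{[n]\setminus T}$, so that $x_ay_b-x_by_a\in P_{T'}(G)\setminus P_T(G)$, a contradiction.
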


\begin{proof}
For the proof we refer to \cite{HKR}.
\end{proof}

Therefore $J_{G}$ is the intersection of prime ideals. That is, $S/J_{G}$ is
a reduced ring. Moreover, we remark that $J_G$ is an ideal generated by quadrics
and therefore homogeneous, so that $S/J_{G}$ is a graded ring with natural grading induced
by the $\mathbb{N}$-grading of $S$. As a technical tool we shall need the following
result.

\begin{proposition} \label{2.3} Let $I \subset S$ denote an ideal. Let $\underline{f}
= f_1,\ldots,f_r$ denote an $S/I$-regular sequence. Then $\underline{f}S \cap I =
\underline{f}I$.
\end{proposition}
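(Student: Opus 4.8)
The plan is to prove this by induction on $r$, the length of the regular sequence. The statement asserts that for an $S/I$-regular sequence $\underline{f} = f_1, \ldots, f_r$, the ``mixed'' intersection $\underline{f}S \cap I$ is no larger than the obvious submodule $\underline{f}I$. One inclusion, $\underline{f}I \subseteq \underline{f}S \cap I$, is trivial, so the content is the reverse inclusion. First I would dispose of the base case $r=1$: suppose $g f_1 \in I$ for some $g \in S$; since $f_1$ is a nonzerodivisor on $S/I$, multiplication by $f_1$ is injective on $S/I$, so the image of $g$ in $S/I$ is zero, i.e. $g \in I$, whence $g f_1 \in f_1 I$. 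This gives $f_1 S \cap I = f_1 I$.

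For the inductive step, assume the result holds for regular sequences of length $r-1$, and let $\underline{f} = f_1, \ldots, f_r$ be $S/I$-regular. Take an element $h \in \underline{f}S \cap I$, so $h = \sum_{i=1}^r g_i f_i$ with $h \in I$. Reading this modulo $I$, we get $\sum_{i=1}^r \bar{g}_i \bar{f}_i = 0$ in $S/I$. Because $\bar{f}_1, \ldots, \bar{f}_r$ is a regular sequence on $S/I$, the first syzygy module of the sequence is generated by the Koszul (trivial) syzygies; concretely, $\bar{g}_r \in (\bar{f}_1, \ldots, \bar{f}_{r-1})(S/I)$. Translating back, $g_r \in (f_1, \ldots, f_{r-1})S + I$, say $g_r = \sum_{i=1}^{r-1} a_i f_i + b$ with $b \in I$. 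Substituting, $h - b f_r = \sum_{i=1}^{r-1}(g_i + a_i f_r) f_i$ lies in $(f_1, \ldots, f_{r-1})S$, and also $h - b f_r \in I$ since $h \in I$ and $b f_r \in I$. By the induction hypothesis applied to the $S/I$-regular sequence $f_1, \ldots, f_{r-1}$, we conclude $h - b f_r \in (f_1, \ldots, f_{r-1})I \subseteq \underline{f}I$. Since $b f_r \in I f_r \subseteq \underline{f}I$ as well, we get $h \in \underline{f}I$, completing the induction.

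The one point that needs care — and which I regard as the main technical obstacle — is the assertion that the syzygies of a regular sequence are generated by the Koszul syzygies, used above to extract $\bar{g}_r \in (\bar{f}_1, \ldots, \bar{f}_{r-1})(S/I)$. This is a standard fact (it is the exactness of the Koszul complex $K_\bullet(\underline{f}; S/I)$ in degree one for a regular sequence, equivalently $H_1$ of the Koszul complex vanishes), but one should either cite it or give the short direct argument: working mod $I$ and mod $(f_1,\dots,f_{r-1})$, the relation becomes $\bar{g}_r \bar{f}_r \equiv 0$, and $\bar{f}_r$ is a nonzerodivisor on $(S/I)/(f_1,\dots,f_{r-1})$ by the regular-sequence hypothesis, forcing $\bar{g}_r \equiv 0$ there. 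Everything else is elementary bookkeeping with the expression $h = \sum g_i f_i$, so no further subtleties arise. An alternative, slightly slicker route is to phrase the whole argument as the injectivity of the natural map $\underline{f}S/\underline{f}I \to S/I$; this map sends the class of $\sum g_i f_i$ to $\sum \bar{g}_i \bar{f}_i$, and its source is the image of $(S/I)^r$ under multiplication by the $f_i$, so injectivity is again exactly the vanishing of the first Koszul homology. I would present the inductive version as it is the most self-contained.
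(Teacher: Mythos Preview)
Your proof is correct. The paper, however, takes a different and much shorter route: it observes that $\Tor_1^S(S/\underline{f}S, S/I) \cong (\underline{f}S \cap I)/\underline{f}I$ (the standard identification of the first Tor of two cyclic modules with the quotient of the intersection by the product), then computes this Tor via the Koszul resolution of $S/\underline{f}S$ to get $\Tor_1^S(S/\underline{f}S, S/I) \cong H_1(\underline{f}; S/I)$, which vanishes because $\underline{f}$ is $S/I$-regular. Your inductive element-chase is in effect a hands-on proof of the vanishing of $H_1(\underline{f}; S/I)$, and you yourself note the Koszul interpretation at the end; the paper simply invokes that vanishing together with the Tor identification in one line. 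The trade-off is that your argument is entirely self-contained and avoids appealing to the Tor formula, while the paper's argument is cleaner but presupposes familiarity with both the $\Tor_1$--intersection identity and the Koszul computation of Tor.
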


\begin{proof} It is easy to see that $\Tor_1^S(S/\underline{f}S, S/I)
\cong \underline{f}S \cap I/\underline{f}I$. Moreover
\[
\Tor_1^S(S/\underline{f}S, S/I) \cong H_1(\underline{f};S/I),
\]
where $H_i(\underline{f};S/I)$ denotes the Koszul homology of $\underline{f}$
with respect to $S/I$. But these homology modules vanish for $i >0$.
\end{proof}

Let $M$ denote a finitely generated graded $S$-module. In the sequel we shall use also
the local cohomology modules of $M$ with respect to $S_+,$ denoted by $H^i(M),
i \in \mathbb{Z}$. Note that they are graded Artinian $S$-modules. We refer to the
textbook of Brodmann and Sharp (see \cite{BS})
for the basics on it. In particular the Castelnuovo-Mumford regularity $\reg M$ of $M$
is defined as
\[
\reg(M):= \max \{e(H^{i}(M)) + i| \depth(M)\leq i \leq \dim(M)\},
\]
where $e(H^{i}(M))$ is the least integer $m$ such that, for all $k > m,$ the degree $k$ part
of the $i$-th local cohomology module of M is zero.
For our investigations we need the following definition.

\begin{definition} \label{2.4} Let $M$ denote a finitely generated graded
$S$-module and $d = \dim M$. For an integer $i \in \mathbb{Z}$ put
\[
\omega^i(M) = \Ext_S^{2n-i}(M,S(-2n))
\]
and call it the $i$-th module of deficiency. Moreover we define $\omega(M) = \omega^d(M)$
the canonical module of $M$. We write also $\omega_{2\times}(M)= \omega(\omega(M)).$
These modules have been introduced and studied
in \cite{Sch1}.
\end{definition}

Note that by the graded version of Local Duality (see e.g. \cite{BS}) there is the
natural graded isomorphism $\omega^i(M) \cong \Hom_K(H^i(M),K)$ for all $i \in \mathbb{Z}$.
 For a finitely generated graded $S$-module $M$ and an integer
$i \in \mathbb{N}$ we set
\[
(\Ass M)_i = \{\mathfrak{p} \in \Ass M | \dim S/\mathfrak{p} = i\}.
\]
In the following we shall summarize a few properties on the modules of
deficiencies.

\begin{proposition} \label{2.5} Let $M$ denote a finitely generated
graded $S$-module and $d = \dim M.$
\begin{itemize}
\item[(a)] $\dim \omega^i(M) \leq i$ and $\dim \omega^d(M) = d$.
\item[(b)] $(\Ass \omega^i(M))_i = (\Ass M)_i$ for all $0 \leq i \leq d$.
\item[(c)] $M$ satisfies the Serre condition $S_2$ if and only if $\dim \omega^i(M)
\leq i-2$ for all $0 \leq i < d$.
\item[(d)] There is a natural homomorphism $M \to \omega^d(\omega^d(M))$. It is
an isomorphism if and only if $M$ satisfies the Serre condition $S_2$.
\item[(e)] For a homogeneous ideal $I \subset S$ there is a natural isomorphism
$\omega^d(\omega^d(S/I)) \cong \Hom_S(\omega^d(S/I),\omega^d(S/I)), d = \dim S/I,$ and it admits
the structure of a commutative Noetherian ring, the $S_2$-fication of $S/I$.
\item[(f)] The natural map $S/I \to \Hom_S(\omega^d(S/I),\omega^d(S/I)), d = \dim S/I,$ sends
the unit element to the identity map. Therefore it is a ring homomorphism.
\end{itemize}
\end{proposition}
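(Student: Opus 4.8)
The plan is to reduce the whole proposition to graded Local Duality, which (as already recorded after Definition \ref{2.4}) identifies $\omega^i(M) = \Ext_S^{2n-i}(M,S(-2n))$ with the Matlis dual $\Hom_K(H^i(M),K)$ of the $i$-th local cohomology module. Since $S$ is regular, hence Gorenstein, of dimension $2n$, the standard estimates for Ext over a Gorenstein ring are available: for any finitely generated $M$ one has $\Ext_S^j(M,S) = 0$ for $j < 2n - \dim M$ and $\dim \Ext_S^j(M,S) \leq 2n - j$, both of which I would check by localizing and using that over a local Gorenstein ring the nonvanishing of $\Ext^j$ forces the corresponding depth and dimension inequalities. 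Setting $j = 2n-i$ gives at once the bound $\dim \omega^i(M) \leq i$ in (a). For the equality $\dim \omega^d(M) = d$ I would localize at a prime $\mathfrak p \in \Ass M$ with $\dim S/\mathfrak p = d$, where $\height \mathfrak p = 2n-d$ and $M_{\mathfrak p}$ has finite length, so the top Ext $\Ext_{S_{\mathfrak p}}^{2n-d}(M_{\mathfrak p}, S_{\mathfrak p})$ is nonzero and $\mathfrak p \in \Supp \omega^d(M)$.

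For (b) I would again localize at a prime $\mathfrak p$ with $\dim S/\mathfrak p = i$, so that $\dim S_{\mathfrak p} = 2n-i$. Then $\omega^i(M)_{\mathfrak p} = \Ext_{S_{\mathfrak p}}^{2n-i}(M_{\mathfrak p}, S_{\mathfrak p}(-2n))$ is the top Ext over the local ring $S_{\mathfrak p}$, and by Local Duality over $S_{\mathfrak p}$ it is the Matlis dual of $H^0_{\mathfrak p S_{\mathfrak p}}(M_{\mathfrak p})$, the $\mathfrak p$-torsion submodule of $M_{\mathfrak p}$. This torsion is nonzero exactly when $\depth M_{\mathfrak p} = 0$, that is when $\mathfrak p \in \Ass M$. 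On the other hand, since $\dim \omega^i(M) \leq i = \dim S/\mathfrak p$, a prime of coheight $i$ is minimal in $\Supp \omega^i(M)$ whenever it lies there, hence belongs to $\Ass \omega^i(M)$ precisely when $\omega^i(M)_{\mathfrak p} \neq 0$. Comparing the two conditions yields $(\Ass \omega^i(M))_i = (\Ass M)_i$.

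Part (c) is the characterization of Serre's condition $S_2$. Here I would exploit that $S_2$ is detected prime by prime through the vanishing of the low local cohomology of the localizations, which by Local Duality translates into control of the supports of the $\omega^i(M)$. Concretely, failure of $S_2$ at a prime $\mathfrak p$ of coheight $j < d$ forces $\mathfrak p$ into $\Supp \omega^i(M)$ for some $i$ with $\dim S/\mathfrak p$ too close to $i$; packaging this across all primes gives the equivalence with $\dim \omega^i(M) \leq i-2$ for all $i < d$. This is exactly the criterion established in \cite{Sch1}, which I would invoke once the dimension bookkeeping of (a) and (b) is in place.

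The heart of (d)--(f) is a biduality argument. The natural map in (d) is the canonical evaluation morphism $M \to \omega^d(\omega^d(M))$; I would verify that it is always an isomorphism in codimension $\leq 1$, and then show that the $S_2$ condition, encoded by the dimension bounds of (c), is precisely what makes its kernel and cokernel vanish, since their supports otherwise have codimension $\geq 2$. For (e) and (f) I would specialize to $M = S/I$, use that the canonical module $\omega = \omega^d(S/I)$ already satisfies $S_2$, and identify the biduality $\omega^d(\omega^d(S/I))$ with the endomorphism ring $\Hom_S(\omega,\omega)$; the latter is a finitely generated $S/I$-module embedding into the total quotient ring over the top-dimensional components, hence a commutative Noetherian ring, and it is the $S_2$-fication of $S/I$. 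The multiplication action of $S/I$ on $\omega$ gives the map $S/I \to \Hom_S(\omega,\omega)$ carrying $1$ to the identity, which is therefore a ring homomorphism, giving (f). The main obstacle I anticipate lies in (d)--(e): carefully producing the biduality map and proving that $\Hom_S(\omega,\omega)$ is exactly the $S_2$-fication, and not merely some birational extension, which is where the structural results of \cite{Sch1} are indispensable.
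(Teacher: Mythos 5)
Your proposal is correct and takes essentially the same route as the paper: the paper offers no independent argument for this proposition, proving it simply by citing \cite{Sch1} and \cite{Sch2} with the remark that the graded case follows the same line of reasoning. Your reconstruction --- graded local duality over the Gorenstein ring $S$, localization at primes $\mathfrak{p}$ of coheight $i$ for (a) and (b), and the biduality/endomorphism-ring arguments for (c)--(f), for which you ultimately invoke the structural results of \cite{Sch1} and \cite{Sch2} --- is precisely the line of argument of those cited references.
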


\begin{proof} The results are shown in \cite{Sch1} and \cite{Sch2}. The proofs
in the graded case follow the same line of arguments.
\end{proof}

A decreasing sequence $\{M_i\}_{0\leq i \leq d}$ of a $d$-dimensional $S$-module
$M$ is called dimension filtration of $M$, if
$M_i/M_{i-1}$ is either zero or of dimension $i$ for all $i = 0, \ldots,d,$ where
$M_{-1} = 0.$ It was shown (see \cite{Sch3}) that the dimension filtration exists and
is uniquely determined.

\begin{definition} \label{2.6} An $S$-module $M$ is called sequentially Cohen-Macaulay
if the dimension filtration $\{M_i\}_{0\leq i \leq d}$ has the property that
$M_i/M_{i-1}$ is either zero or an $i$-dimensional Cohen-Macaulay module for all
$i = 0,\dots,d,$ (see \cite{Sch3}).
Note that in \cite{Sch3} this notion was originally called Cohen-Macaulay filtered.
\end{definition}

Note that a sequentially Cohen-Macaulay $S$-module $M$ with $\depth M \geq
\dim M -1$ was studied by G\^{o}to (see \cite{Go}) under the name approximately Cohen-Macaulay.
For our purposes here we need the following characterization of sequentially
Cohen-Macaulay modules.

\begin{theorem} \label{2.7}  Let $M$ be a finitely
generated graded $S$-module with $d = \dim M.$ Then the following conditions are equivalent:
\begin{itemize}
\item[(i)] $M$ is a sequentially Cohen-Macaulay.
\item[(ii)] For all $0 \leq i < d$ the module of deficiency $\omega^i(M)$ is either zero or an $i$-dimensional Cohen-Macaulay module.
\item[(iii)] For all $0 \leq i \leq d$ the modules $\omega^i(M)$ are either zero or $i$-dimensional Cohen-Macaulay modules.
\end{itemize}
\end{theorem}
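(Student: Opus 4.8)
The plan is to pass through local duality and argue with the dimension filtration, using the long exact sequence of modules of deficiency attached to a short exact sequence. Recall from Definition \ref{2.4} and the remark after it that $\omega^i(M)\cong \Hom_K(H^i(M),K)$, so a finitely generated module $N$ of dimension $e$ is Cohen--Macaulay if and only if $\omega^i(N)=0$ for all $i\neq e$, while in general $\omega^i(N)=0$ for $i>\dim N$ and $\dim\omega^e(N)=e$ by Proposition \ref{2.5}(a). Since $\omega^i=\Ext_S^{2n-i}(-,S(-2n))$ is a contravariant cohomological functor, every short exact sequence $0\to M'\to M\to M''\to 0$ yields a long exact sequence $\cdots\to\omega^i(M'')\to\omega^i(M)\to\omega^i(M')\to\omega^{i-1}(M'')\to\cdots$, which is the main computational device throughout.

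The implication (iii)$\Rightarrow$(ii) is a triviality, dropping only the index $i=d$. For (i)$\Rightarrow$(iii) I would induct on $d=\dim M$. Let $\{M_i\}$ be the dimension filtration and set $N=M/M_{d-1}=M_d/M_{d-1}$, which is Cohen--Macaulay of dimension $d$ by sequential Cohen--Macaulayness, so $\omega^j(N)=0$ for $j\neq d$. Feeding $0\to M_{d-1}\to M\to N\to 0$ into the long exact sequence and using $\dim M_{d-1}\le d-1$ (so $\omega^d(M_{d-1})=0$) gives $\omega^d(M)\cong\omega^d(N)$, the canonical module of a $d$-dimensional Cohen--Macaulay module, hence itself $d$-dimensional Cohen--Macaulay; and for $i<d$ the vanishing $\omega^i(N)=\omega^{i-1}(N)=0$ forces $\omega^i(M)\cong\omega^i(M_{d-1})$. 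Since $M_{d-1}$ is again sequentially Cohen--Macaulay of smaller dimension, the inductive hypothesis finishes (iii).

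The essential content is (ii)$\Rightarrow$(i). Here the key elementary lemma is that, under (ii), $\omega^i(M)=0$ whenever $(\Ass M)_i=\emptyset$: indeed Proposition \ref{2.5}(b) gives $(\Ass\omega^i(M))_i=(\Ass M)_i=\emptyset$, so $\dim\omega^i(M)\le i-1$ by Proposition \ref{2.5}(a), and then (ii) forces $\omega^i(M)=0$ because a nonzero $i$-dimensional Cohen--Macaulay module is pure $i$-dimensional. This already settles the equidimensional case, in which all associated primes have dimension $d$: then $(\Ass M)_i=\emptyset$ for every $i<d$, so $\omega^i(M)=0$ for $i<d$, i.e. $H^i(M)=0$ for $i<d$, whence $\depth M\ge d=\dim M$ and $M$ is Cohen--Macaulay. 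I would then run an induction on the number of nonzero steps of the dimension filtration: peel off the top quotient $N=M/M_{d-1}$, transfer (ii) for $M$ to (ii) for $N$ and for $M_{d-1}$ through the long exact sequence, and apply the equidimensional case to conclude that $N$ is Cohen--Macaulay and the inductive hypothesis to conclude that $M_{d-1}$ is sequentially Cohen--Macaulay; since $M$ is sequentially Cohen--Macaulay precisely when $N$ is $d$-dimensional Cohen--Macaulay and $M_{d-1}$ is sequentially Cohen--Macaulay, this closes the loop (i)$\Leftrightarrow$(ii)$\Leftrightarrow$(iii).

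The hard part is exactly this reduction: transferring (ii) across $0\to M_{d-1}\to M\to N\to 0$ is clean only in the range $i>\dim M_{d-1}$, where the long exact sequence splits into isomorphisms $\omega^i(M)\cong\omega^i(N)$, whereas in the low range $i\le\dim M_{d-1}$ it couples $\omega^i(N)$, $\omega^i(M)$ and $\omega^i(M_{d-1})$, so one cannot read off the vanishing of the low-degree deficiencies of the equidimensional quotient $N$ by formal manipulation alone. The conceptual reason that it nevertheless works, and the way I would make the argument airtight, is the biduality spectral sequence $E_2^{p,q}=\omega^p(\omega^q(M))\Rightarrow M$ underlying Proposition \ref{2.5}(d)--(f) (see \cite{Sch1}): condition (iii) says precisely that $\omega^q(M)$ is zero or $q$-dimensional Cohen--Macaulay, i.e. $\omega^p(\omega^q(M))=0$ for $p\neq q$, so the spectral sequence degenerates and equips $M$ with a finite filtration whose successive quotients are the modules $\omega^i(\omega^i(M))=\omega(\omega^i(M))$, each the canonical module of an $i$-dimensional Cohen--Macaulay module and hence $i$-dimensional Cohen--Macaulay. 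By the uniqueness of the dimension filtration this filtration is the dimension filtration, so $M$ is sequentially Cohen--Macaulay, completing the circle of implications.
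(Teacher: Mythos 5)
Your implications (iii)$\Rightarrow$(ii) and (i)$\Rightarrow$(iii) are fine: the induction on $d$ through the dimension filtration, using the long exact sequence of deficiency modules and the fact that $\omega^d(M)\cong\omega^d(M/M_{d-1})$ is the canonical module of a $d$-dimensional Cohen--Macaulay module, is correct, and so is your key lemma that under (ii) the condition $(\Ass M)_i=\emptyset$ forces $\omega^i(M)=0$ via Proposition \ref{2.5}(a),(b). The genuine gap is that, as written, you never prove any implication whose \emph{hypothesis} is (ii). You correctly diagnose that the long-exact-sequence transfer across $0\to M_{d-1}\to M\to N\to 0$ fails in low degrees, and you replace that induction by the biduality spectral sequence $E_2^{p,q}=\omega^p(\omega^q(M))$ converging to $M$; but the degeneration you invoke needs $\omega^p(\omega^q(M))=0$ for all $p\neq q$, i.e.\ Cohen--Macaulayness of every $\omega^q(M)$ \emph{including} $q=d$ --- and that is hypothesis (iii), not (ii). Your own wording concedes this (``condition (iii) says precisely that\dots''). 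So what the proposal actually establishes is (i)$\Rightarrow$(iii), (iii)$\Rightarrow$(ii), and (iii)$\Rightarrow$(i), i.e.\ (i)$\Leftrightarrow$(iii)$\Rightarrow$(ii); the implication (ii)$\Rightarrow$(i) (equivalently (ii)$\Rightarrow$(iii)) is never proved. This is not cosmetic: the whole difference between (ii) and (iii) is that Cohen--Macaulayness of the top module $\omega^d(M)$ is \emph{not} assumed in (ii) and must be deduced.

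The gap can be closed with the very spectral sequence you introduce, run under (ii) alone. Under (ii), every entry $E_2^{p,q}$ with $q<d$ and $p\neq q$ vanishes, and every column with $q>d$ vanishes because $\omega^q(M)=0$ there. Now look at the terms $\omega^p(\omega^d(M))$ with $p\neq d$ in the column $q=d$: every differential into such a term starts in a column $q>d$, hence is zero, and every differential out of it lands in a column $q<d$, where by (ii) the only possibly nonzero entry is the diagonal one, which the index bookkeeping excludes as a target. Hence these terms survive unchanged to $E_\infty$; but the abutment is $M$ concentrated in a single total degree, so all off-diagonal $E_\infty$-terms are zero. Therefore $\omega^p(\omega^d(M))=0$ for all $p\neq d$, which together with $\dim\omega^d(M)=d$ (Proposition \ref{2.5}(a)) says $\omega^d(M)$ is a $d$-dimensional Cohen--Macaulay module: this is exactly (ii)$\Rightarrow$(iii), after which your degeneration argument and the identification of the resulting filtration with the dimension filtration give (i). For what it is worth, the paper itself gives no internal proof of Theorem \ref{2.7} --- it cites the local case from \cite{Sch3} --- so a corrected version of your argument would be a genuine self-contained proof; in its present form, however, the equivalence with (ii) is open.
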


\begin{proof} In the case of a local ring admitting a dualizing complex
this result was shown in \cite[Theorem 5.5]{Sch3}. Similar arguments work
also in the case of a finitely generated graded $S$-module $M$. Note that the equivalence
of (i) and (iii) was announced (without proof) in \cite{St}.
\end{proof}

\section{Complete bipartite graphs}
A bipartite graph is a graph whose vertices can be divided into two disjoint sets $V_{1}$ and
$V_{2}$ such every edge of $G$ connects a vertex in $V_{1}$ to one in $V_{2}$. Now the complete
bipartite graph is a bipartite graph $G$ such that for any two vertices, $v_{1}\in
V_{1}$ and $v_{2}\in V_{2}$, $v_{1}v_{2}$ is an edge in $G$. If $|V_{1}|=n$ and $|V_{2}|=m$ then it is usually  denoted by $K_{n,m}$. To simplifying notations we denote it often by $G$.

\begin{definition} \label{3.0}
For a sequence of
integers $1 \leq i_1 < i_2 < \ldots < i_k \leq n+m$ let $I(i_1,i_2,\ldots,i_k)$
denote the ideal generated by the $2 \times 2$ minors of the matrix
\[
\begin{pmatrix}
x_{i_1} & x_{i_2} & \cdots & x_{i_k} \\
y_{i_1} & y_{i_2} & \cdots & y_{i_k}
\end{pmatrix}.
\]
Note that $I(i_1,i_2,\ldots,i_k)$ is the ideal of the complete graph on the
vertex set $\{i_1,i_2,\ldots,i_k\}$.
\end{definition}

Let $J_{G}$ be the binomial edge ideal of complete bipartite graph on $[n+m]$ vertices and $J_{\tilde{G}}$ be the binomial edge ideal of complete graph on $[n+m]$ vertices. We begin with a lemma concerning the dimension of $S/J_G$.

\begin{lemma} \label{3.1} Let $G = K_{m,n}, m \geq n,$ denote the complete bipartite
graph. Let $\tilde{G}$ denote the complete graph on $[n+m]$. Let
$A_n = (x_1,\ldots,x_n,y_1,\ldots,y_n)$ for $n \geq 1$ and
$B_m = (x_{n+1},\ldots,x_{n+m},y_{n+1},\ldots,y_{n+m})$ for $n \geq 2$ and
$B_m = S$ for $n = 1$.
\begin{itemize}
\item[(a)] $J_G = J_{\tilde{G}} \cap A_n \cap B_m$ is the minimal primary decomposition of $J_G$.
\item[(b)] $\dim S/J_G = \max \{ n+m+1, 2m\}$.
\item[(c)] $(J_{\tilde{G}}\cap A_{n},J_{\tilde{G}}\cap B_{m})=J_{\tilde{G}}.$
\end{itemize}
\end{lemma}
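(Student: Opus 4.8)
The plan is to use the general description of the primary decomposition of a binomial edge ideal from Lemma \ref{2.2} and specialize it to $G = K_{m,n}$. First I would identify which subsets $T \subseteq [n+m]$ yield minimal primes $P_T(G)$. For $T = \emptyset$ the component $G_{[n+m]}$ is connected (the complete bipartite graph is connected), so $P_{\emptyset}(G) = J_{\tilde G}$, the ideal of the complete graph on all $n+m$ vertices. For $T \neq \emptyset$, I would analyze $G_{[n+m]\setminus T}$: deleting a proper nonempty subset of $V_1$ (the part of size $n$) still leaves a connected complete bipartite graph unless one whole side is removed, and similarly for $V_2$. Hence the only way to increase the number of connected components by removing vertices one at a time — the condition in Lemma \ref{2.2}(c) — is to remove an entire side. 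Removing all of $V_1$ (so $T$ = the $n$-element set, giving $A_n$) leaves $m$ isolated vertices, each its own component with trivial binomial edge ideal, so $P_T(G) = A_n$; this $T$ is minimal because deleting any single vertex from $T$ merges those $m$ isolated vertices back through a common neighbor, decreasing $c$. Symmetrically, removing all of $V_2$ gives $B_m$ (when $n \geq 2$; when $n = 1$ removing the single vertex of $V_1$ is the previous case and $B_m = S$ contributes nothing). This yields the decomposition $J_G = J_{\tilde G} \cap A_n \cap B_m$, and the primes are pairwise incomparable by a height count via Lemma \ref{2.2}(a): $\height J_{\tilde G} = n+m-1$, $\height A_n = 2n$, $\height B_m = 2m$, and one checks none contains another, so the decomposition is minimal, proving (a).

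For (b), I would compute $\dim S/\mathfrak{p} = 2(n+m) - \height \mathfrak{p}$ for each minimal prime: $\dim S/J_{\tilde G} = n+m+1$, $\dim S/A_n = 2m$, $\dim S/B_m = 2n$. Since $m \geq n$ we have $2n \leq n+m+1$ always, so the maximum is $\max\{n+m+1, 2m\}$, which gives (b). (The edge case $n=1$ is consistent: then $B_m = S$ is discarded and $\dim S/A_n = 2m$, matching the formula.)

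For (c), the claim is $(J_{\tilde G} \cap A_n,\, J_{\tilde G} \cap B_m) = J_{\tilde G}$. One inclusion is trivial since both ideals on the left are contained in $J_{\tilde G}$. For the reverse inclusion, the key observation is that a generator $x_i y_j - x_j y_i$ of $J_{\tilde G}$ with both $i, j \leq n$ lies in $J_{\tilde G} \cap A_n$, with both $i,j \geq n+1$ lies in $J_{\tilde G} \cap B_m$, and the only remaining case is $i \leq n < j$; here $x_i y_j - x_j y_i$ is not individually in either intersection, so the real content is to show these "mixed" generators are recovered modulo the ones already accounted for. I expect this to be the main obstacle. The idea is to use the $2\times 2$ minor (Plücker-type) relations inside $J_{\tilde G}$: for indices $i \leq n < j$ and an auxiliary index $k$, the Plücker relation expresses $x_i y_j - x_j y_i$ times something in terms of minors involving $k$; choosing $k \leq n$ puts one factor in $A_n$ and choosing $k > n$ puts it in $B_m$, and combining shows the mixed minor lies in the sum. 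More precisely I would argue via Proposition \ref{2.3}: modulo a suitable regular sequence (e.g. using that a general linear form or a variable $x_k$ is a nonzerodivisor on $S/J_{\tilde G}$, since $S/J_{\tilde G}$ is a Cohen-Macaulay domain), the ideals $J_{\tilde G} \cap A_n$ and $J_{\tilde G} \cap B_m$ become $J_{\tilde G} A_n$-like, and a localization/colon argument — checking the equality of the two ideals at each associated prime, which by Lemma \ref{2.2}(b) and part (a) are just $J_{\tilde G}$, $A_n$, $B_m$ — forces the equality. Since both sides have the same radical (namely $\sqrt{J_{\tilde G}} = J_{\tilde G}$, as $J_{\tilde G}$ is prime) and $J_{\tilde G}$ is prime hence its own associated prime, it suffices to verify the localization $(J_{\tilde G} \cap A_n, J_{\tilde G} \cap B_m)_{J_{\tilde G}} = (J_{\tilde G})_{J_{\tilde G}}$, which holds because after inverting elements outside $J_{\tilde G}$ both $A_n$ and $B_m$ become the unit ideal, so their intersections with $J_{\tilde G}$ become $J_{\tilde G}$ itself. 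This pins down (c).
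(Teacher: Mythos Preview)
Your treatment of (a) and (b) follows the paper's approach closely and is essentially correct; the only minor quibble is that ``pairwise incomparable by a height count'' is not quite a proof (different heights do not preclude containment), but incomparability is immediate by inspecting generators, and the paper does not spell this out either.

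For (c), however, there is a genuine slip that sends you down an unnecessary and ultimately incomplete path. You write that a mixed generator $x_iy_j-x_jy_i$ with $i\le n<j$ ``is not individually in either intersection.'' In fact it lies in \emph{both}: since $x_i,y_i\in A_n$ we have $x_iy_j-x_jy_i\in A_n$, and since $x_j,y_j\in B_m$ it also lies in $B_m$; of course it lies in $J_{\tilde G}$. Once you notice this, (c) is immediate: every $2\times2$ minor $x_iy_j-x_jy_i$ (with $i<j$) has either $i\le n$, in which case it lies in $J_{\tilde G}\cap A_n$, or $i>n$, in which case both indices exceed $n$ and it lies in $J_{\tilde G}\cap B_m$. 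Thus every generator of $J_{\tilde G}$ already sits in the sum, and the reverse inclusion is trivial. The paper packages this via an explicit description of generators of $J_{\tilde G}\cap A_n$ and $J_{\tilde G}\cap B_m$ (using Proposition~\ref{2.3} to identify $I(n+1,\ldots,n+m)\cap A_n$ with $A_n\cdot I(n+1,\ldots,n+m)$), but the heart of the argument is the elementary observation above.

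Your proposed localization workaround also has a gap. Knowing $\sqrt{I}=J_{\tilde G}$ for $I=(J_{\tilde G}\cap A_n,\,J_{\tilde G}\cap B_m)$ and checking $I_{J_{\tilde G}}=(J_{\tilde G})_{J_{\tilde G}}$ is not enough to conclude $I=J_{\tilde G}$: the quotient $J_{\tilde G}/I$ could have embedded associated primes strictly containing $J_{\tilde G}$, and at those primes you have no control. You would need to rule out embedded primes of $S/I$, which is essentially as hard as the original statement.
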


\begin{proof} We start with the proof of (a). We use the statement proved in
Lemma \ref{2.2}. At first consider the case $m > n = 1$.
By view of Lemma \ref{2.2} we have to find all $ \emptyset \not= T \subseteq [1+m]$ such that
$c(T \setminus \{i\}) < c(T)$. Clearly $T_{0}=\{1\}$ satisfy the condition because $c(T_{0}) = m > 1$. Let $T$ denote $T \subset [1+m]$
a subset different of $T_{0}$. Then If $1\in T$ then $c(T)=m+1-|T|$ and $c(T\setminus \{i\})=m+2-|T|$ for $i\neq1$ and if $1\not\in T$ then $c(T)=1$ and
$c(T\setminus \{i\})=1$ for all $i\in T$. Hence we have the above primary decomposition.

Now consider the case of $m \geq n \geq 2$. As above we have to find all
$ \emptyset \not= T \subseteq [n+m]$ such that $c(T\setminus \{i\})<c(T)$ for all $i\in T$.
$T_{1}=\{1,2,\dots,n\}$ satisfy the above condition because $c(T)=m$ and $c(T\setminus \{i\})=1$
for all $i\in T$. Similarly $T_{2}=\{n+1,n+2,\dots,n+m\}$ also satisfies the above condition.

Our claim is that no other $T \subseteq [n+m]$ satisfies this condition. If $T_{1}\not\subseteq T$ and $T_{2}\not\subseteq T$ then $c(T)=1$ so in this case $T$ does not satisfy the above condition. Now suppose that $T_{1} \subsetneq T$ then $c(T)=m-|T\setminus T_{1}|$ and $c(T\setminus \{i\})=m+1-|T\setminus T_{1}|$ if $i \in T\setminus T_{1}$. The same argument works if $T_{2}\subsetneq T$. Hence we have $J_{G}=J_{\tilde{G}}\cap A_{n}\cap B_{m}$.

Then the statement on the dimension in (b) is a consequence of the reduced primary decomposition
shown in (a). To this end recall that $\dim S/A_n = 2m, \dim S/B_m = 2n$ and
$\dim S/J_{\tilde{G}} = n+m+1$.

For the proof of (c) we use the notation of the Definition \ref{3.0}.  Then it follows that
\[
J_{\tilde{G}}\cap A_{n} = (I(1,\ldots,n,n+i), i = 1,\ldots, m, I(n+1,\ldots,n+m)\cap A_n).
\]
Now $A_n$ consists of an $S/I(n+1,\ldots,n+m)$-regular sequence and
\[
I(n+1,\ldots,n+m)\cap A_n = A_n I(n+1,\ldots,n+m)
\]
by Proposition \ref{2.3}. Therefore we get
\[
J_{\tilde{G}}\cap A_{n} = (I(1,\ldots,n,n+i), i = 1,\ldots, m, A_n I(n+1,\ldots,n+m))
\]
and similarly
\[
J_{\tilde{G}}\cap B_{m} = (I(j,n+1,\ldots,n+m), j = 1,\ldots, n, B_m I(1,\ldots,n)).
\]
But this clearly implies that $(J_{\tilde{G}}\cap A_{n},J_{\tilde{G}}\cap B_{m})=J_{\tilde{G}}$ which proves the statement in (c).
\end{proof}

For the further computations we use the previous Lemma \ref{3.1}. In particular
we use three exact sequences shown in the next statement.

\begin{corollary} \label{3.2} With the previous notation we
have the following three exact sequences.
\begin{enumerate}
\item[(1)]
$0\to S/J_{G}\to S/J_{\tilde{G}}\cap A_{n}\oplus S/J_{\tilde{G}}\cap B_{m}\to S/J_{
\tilde{G}}\to 0$.
\item[(2)]
$0\to S/J_{\tilde{G}}\cap A_{n}\to S/J_{\tilde{G}}\oplus S/A_{n}\to S/(J_{
\tilde{G}},A_{n})\to 0$.
\item[(3)]
$0\to S/J_{\tilde{G}}\cap B_{m}\to S/J_{\tilde{G}}\oplus S/B_{m}\to S/(J_{%
\tilde{G}},B_{m})\to 0$.
\end{enumerate}
\end{corollary}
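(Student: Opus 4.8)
The three claimed exact sequences are all instances of the Mayer--Vietoris type short exact sequence associated to an intersection of ideals. The plan is to recall and apply the following elementary fact: if $I, J \subset S$ are ideals, then there is a short exact sequence
\[
0 \to S/(I \cap J) \to S/I \oplus S/J \to S/(I + J) \to 0,
\]
where the first map is $\bar s \mapsto (\bar s, \bar s)$ and the second is $(\bar a, \bar b) \mapsto \bar a - \bar b$. Exactness is checked directly: injectivity on the left is clear since $s \in I$ and $s \in J$ means $s \in I \cap J$; exactness in the middle is the statement that $a - b \in I + J$ with $a \mapsto \bar a$, $b \mapsto \bar b$ forces, after adjusting by an element of $I$ and an element of $J$, a common lift, i.e. an element of $S$ mapping to $\bar a$ in $S/I$ and $\bar b$ in $S/J$; and surjectivity on the right is obvious. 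This is completely standard, so in the write-up I would state it in one line.

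Next I would specialize this fact three times with the appropriate choices of $I$ and $J$:

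\begin{enumerate}
\item[(1)] Take $I = J_{\tilde{G}} \cap A_n$ and $J = J_{\tilde{G}} \cap B_m$. Then $I \cap J = J_{\tilde{G}} \cap A_n \cap B_m = J_G$ by Lemma \ref{3.1}(a), and $I + J = (J_{\tilde{G}} \cap A_n, J_{\tilde{G}} \cap B_m) = J_{\tilde{G}}$ by Lemma \ref{3.1}(c). This gives sequence (1).
\item[(2)] Take $I = J_{\tilde{G}}$ and $J = A_n$. Then $I \cap J = J_{\tilde{G}} \cap A_n$ and $I + J = (J_{\tilde{G}}, A_n)$, giving sequence (2) immediately with no further input.
\item[(3)] Take $I = J_{\tilde{G}}$ and $J = B_m$ to obtain sequence (3) in the same way.
\end{enumerate}

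So there is no real obstacle here; the only non-formal ingredients are Lemma \ref{3.1}(a) and (c), both already established, and they enter only in the identification of the kernel and cokernel in sequence (1). One point worth a remark is that all the ideals involved are homogeneous and the connecting maps are degree-preserving, so these are exact sequences of graded $S$-modules, which is what we will need for the Hilbert series and local cohomology computations in the sequel. I would close by noting that sequences (2) and (3) will typically be used after further simplifying $(J_{\tilde{G}}, A_n)$ and $(J_{\tilde{G}}, B_m)$, but that analysis belongs to the subsequent arguments rather than to the proof of the corollary itself.
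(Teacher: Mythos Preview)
Your argument is correct and is exactly what the paper has in mind: it states that the corollary is an easy consequence of the primary decomposition in Lemma~\ref{3.1} and omits the details, and your Mayer--Vietoris specialization using Lemma~\ref{3.1}(a) and (c) supplies precisely those details.
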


\begin{proof} The proof is an easy consequence of the primary decomposition
as shown in Lemma \ref{3.1}. We omit the details.
\end{proof}
Note that in case of $n=1$ we have $B_{m}=S$ therefore it is enough to consider the exact sequence $(2)$ as $(1)$ and $(3)$ gives no information.
\begin{corollary} \label{3.3} With the previous notation we have that
 \begin{eqnarray*}
\depth S/J_G=\left\{\begin{array}{ll}
 m+2, & \hbox{if \, $n=1$ ;} \\
n+2, & \hbox{if \, $m\geq n>1$}
 \end{array}\right.
\end{eqnarray*} and $\reg S/J_G \leq 2$.
\end{corollary}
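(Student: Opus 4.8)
The plan is to extract both invariants from the three short exact sequences of Corollary~\ref{3.2}, combined with the homological data of the modules occurring as their middle and right-hand terms. First I would record these data. Since $\tilde{G}$ is the complete graph on $[n+m]$, the ring $S/J_{\tilde{G}}$ is the coordinate ring of the Segre product $\mathbb{P}^1_K\times\mathbb{P}^{n+m-1}_K$, hence Cohen--Macaulay of dimension $n+m+1$ with a linear resolution, so $\depth S/J_{\tilde{G}}=n+m+1$ and $\reg S/J_{\tilde{G}}=1$. The rings $S/A_n$ and $S/B_m$ are polynomial rings in $2m$, resp. $2n$, variables, so they are Cohen--Macaulay of depth $2m$, resp. $2n$, and of regularity $0$. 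Finally, passing to $S/A_n$ (resp. $S/B_m$) kills every minor meeting a deleted index, so $S/(J_{\tilde{G}},A_n)$ and $S/(J_{\tilde{G}},B_m)$ are again Segre rings, namely the coordinate rings of $\mathbb{P}^1_K\times\mathbb{P}^{m-1}_K$ and $\mathbb{P}^1_K\times\mathbb{P}^{n-1}_K$; thus they are Cohen--Macaulay of dimension $m+1$, resp. $n+1$, each with $\reg=1$.

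For the depth I would run the long exact sequence of local cohomology $H^i(-)$ attached to (2), (3) and (1), in that order. Applied to (2) with $\depth(S/J_{\tilde{G}}\oplus S/A_n)=\min\{n+m+1,2m\}\geq m+2$ and $\depth S/(J_{\tilde{G}},A_n)=m+1$, the depth lemma gives $\depth(S/J_{\tilde{G}}\cap A_n)\geq m+2$; symmetrically (3) yields $\depth(S/J_{\tilde{G}}\cap B_m)\geq n+2$. Feeding these two values into (1), where $\depth S/J_{\tilde{G}}=n+m+1$, produces $\depth S/J_G\geq\min\{m+2,n+2\}=n+2$ for $m\geq n>1$. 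The case $n=1$ is shorter: there $B_m=S$, so $J_G=J_{\tilde{G}}\cap A_1$ and only (2) is relevant, giving $\depth S/J_G\geq m+2$.

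The delicate point---and the main obstacle---is to upgrade these lower bounds to equalities, i.e. to exhibit the nonvanishing of the local cohomology in the critical degree. I would read this off the same long exact sequences by exploiting the vanishing ranges of the Cohen--Macaulay neighbours. In (2) the middle term has depth $\geq m+2$, so $H^{m+1}(S/J_{\tilde{G}}\oplus S/A_n)=0$; hence the connecting map $H^{m+1}(S/(J_{\tilde{G}},A_n))\to H^{m+2}(S/J_{\tilde{G}}\cap A_n)$ is injective, and as its source is the nonzero top local cohomology of the Cohen--Macaulay module $S/(J_{\tilde{G}},A_n)$, the target is nonzero; the same argument applied to (3) gives $H^{n+2}(S/J_{\tilde{G}}\cap B_m)\neq 0$. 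In (1) the right-hand term $S/J_{\tilde{G}}$ has its cohomology concentrated in degree $n+m+1$, hence vanishes in degrees $n+1$ and $n+2$, so the long exact sequence yields an isomorphism $H^{n+2}(S/J_G)\cong H^{n+2}(S/J_{\tilde{G}}\cap A_n\oplus S/J_{\tilde{G}}\cap B_m)\neq 0$. This settles both depth formulas. The regularity bound is then routine: the estimate $\reg M'\leq\max\{\reg M,\reg M''+1\}$ for a short exact sequence $0\to M'\to M\to M''\to 0$, applied to (2) and (3), gives $\reg(S/J_{\tilde{G}}\cap A_n)\leq 2$ and $\reg(S/J_{\tilde{G}}\cap B_m)\leq 2$, and a final application to (1) yields $\reg S/J_G\leq\max\{2,\reg S/J_{\tilde{G}}+1\}=2$.
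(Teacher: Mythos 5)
Your proposal is correct and follows essentially the same route as the paper: it uses the three short exact sequences of Corollary~\ref{3.2}, the Cohen--Macaulayness and regularity of $S/J_{\tilde{G}}$, $S/A_n$, $S/B_m$, $S/(J_{\tilde{G}},A_n)$, $S/(J_{\tilde{G}},B_m)$, and the standard depth/regularity estimates for short exact sequences (the paper cites \cite[Proposition 1.2.9]{BH} and \cite[Corollary 20.19]{Ei} for exactly these). Your write-up is in fact more careful than the paper's, since it makes explicit the step the paper leaves implicit, namely upgrading the depth lower bounds to equalities via the injectivity of the connecting homomorphism out of the top local cohomology of the Cohen--Macaulay quotients.
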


\begin{proof} The statement is an easy consequence of the short exact sequences
shown in Corollary \ref{3.2}. To this end note that $S/J_{\tilde{G}},
S/(J_{\tilde{G}},A_n)$ and $S/(J_{\tilde{G}}, B_m)$ are Cohen-Macaulay rings
of dimension $n+m+1, m+1$ and $n+1$ respectively. Moreover $\reg S/J_{\tilde{G}} =
\reg S/(J_{\tilde{G}},A_n)= \reg S/(J_{\tilde{G}}, B_m) = 1$. By using the exact
sequences it provides the statement on the regularity. For the behaviour of
the depth respectively the regularity in short exact sequences see \cite[Proposition 1.2.9]{BH}
respectively \cite[Corollary 20.19]{Ei}.
\end{proof}

\section{The modules of deficiency}
The goal of this section is to describe all the local cohomology
modules $H^i(S/J_G)$ of the binomial edge ideal of a complete bipartite
graph $G$. We do this by describing their Matlis duals which by Local
Duality are the modules of deficiencies. Moreover, for a homogeneous
ideal $J \subset S$ let $H(S/J,t)$ denote the Hilbert series, i.e.
$H(S/J,t) = \sum_{i \geq 0} (\dim_K [S/J]_i) t^i$.

We start our investigations with the so-called star graph. That is complete
bipartite graph $K_{m,n}$ with $n = 1$. For $m \leq 2$ the ideal $J_G$ is a complete
intersection generated by one respectively two quadrics so let us assume that $m > 2$.

\begin{theorem} \label{4.1} Let $G$ denote the star graph $K_{m,1}$. Then the
binomial edge ideal $J_G \subset S$ has the following properties:
\begin{itemize}
\item[(a)] $\reg S/J_G = 2$.
\item[(b)] $\omega^i(S/J_G) = 0$ if and only if $i \not\in \{m+2,2m\}$.
\item[(c)] $\omega^{2m}(S/J_G) \cong S/A_1(-2m)$
\item[(d)] $\omega^{m+2}(S/J_G)$ is a $(m+2)$-dimensional Cohen-Macaulay module and there is an isomorphism $\omega^{m+2}(\omega^{m+2}(S/J_G))\cong (J_{\tilde{G}},A_1)/J_{\tilde{G}}$.
\end{itemize}
\end{theorem}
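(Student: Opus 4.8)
The plan is to analyze the star graph $K_{m,1}$ through the primary decomposition $J_G = J_{\tilde G} \cap A_1$ from Lemma~\ref{3.1}, using the single relevant short exact sequence
\[
0 \to S/J_G \to S/J_{\tilde G} \oplus S/A_1 \to S/(J_{\tilde G}, A_1) \to 0
\]
obtained by combining sequences (1) and (2) of Corollary~\ref{3.2} in the case $n=1$ (where $B_m = S$). The three rings appearing besides $S/J_G$ are well understood: $S/J_{\tilde G}$ is the Segre product, Cohen--Macaulay of dimension $m+2$ with a linear resolution; $S/A_1$ is a polynomial ring in $2m$ variables, Cohen--Macaulay of dimension $2m$; and $S/(J_{\tilde G}, A_1) \cong S/(x_1,y_1,J_{\tilde G}_{\{2,\ldots,m+1\}})$ is Cohen--Macaulay of dimension $m+1$ with a linear resolution. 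Since $m>2$, these three dimensions are $m+2$, $2m$, and $m+1$, with $2m > m+2$.

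First I would apply the exact functor $\omega^\bullet(-) = \Hom_K(H^\bullet(-),K)$, i.e. take the long exact sequence of modules of deficiency attached to the short exact sequence above. Because $S/J_{\tilde G}$, $S/A_1$, and $S/(J_{\tilde G},A_1)$ are Cohen--Macaulay, their only nonvanishing deficiency modules sit in degrees $m+2$, $2m$, and $m+1$ respectively, where they equal the respective canonical modules. Feeding this into the long exact sequence of $\omega^i$'s immediately forces $\omega^i(S/J_G) = 0$ for $i \notin \{m+1, m+2, 2m\}$, and gives $\omega^{2m}(S/J_G) \cong \omega^{2m}(S/A_1) = \omega(S/A_1) \cong S/A_1(-2m)$ (part (c)), since the $2m$-graded-component of the sequence is isolated (nothing else has dimension $2m$). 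This also yields $\reg S/J_G = 2$: the upper bound $\reg S/J_G \le 2$ is Corollary~\ref{3.3}, and equality follows because $\omega^{2m}(S/J_G) = S/A_1(-2m)$ has $a$-invariant forcing $e(H^{2m}(S/J_G)) = -2m$, hence $e(H^{2m}) + 2m = 0$ is not the maximum, but the linear strand combined with non-Cohen-Macaulayness (depth $m+2 < \dim$) and the explicit resolution, or more directly the fact that $J_G$ is minimally generated by quadrics and is not linearly resolved, gives $\reg = 2$ — I would pin this down via the graded Betti numbers or via $e(H^{m+2}(S/J_G)) = -(m+1)$ coming from (d).

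The remaining and most delicate point is part (d): identifying $\omega^{m+2}(S/J_G)$. Here the long exact sequence of deficiencies around degree $m+1$ and $m+2$ reads
\[
0 \to \omega^{m+2}(S/J_G) \to \omega^{m+2}(S/J_{\tilde G}) \to \omega^{m+1}\!\big(S/(J_{\tilde G},A_1)\big) \to \omega^{m+1}(S/J_G) \to 0,
\]
the outer zeros coming from the vanishing of deficiency modules of the Cohen--Macaulay pieces in the neighbouring degrees. I would argue that the connecting map $\omega^{m+2}(S/J_{\tilde G}) \to \omega^{m+1}(S/(J_{\tilde G},A_1))$ is surjective — equivalently $\omega^{m+1}(S/J_G) = 0$ — by a depth/dimension count: $\omega^{m+1}(S/J_G)$ has dimension $\le m+1$ and its associated primes of top dimension would have to come from $(\Ass S/J_G)_{m+1}$ by Proposition~\ref{2.5}(b), but the only minimal primes of $J_G$ are $J_{\tilde G}$ (dimension $m+2$) and $A_1$ (dimension $2m$), neither of dimension $m+1$, and $S/J_G$ is reduced, so $\omega^{m+1}(S/J_G) = 0$. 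Consequently $\omega^{m+2}(S/J_G)$ is the kernel of a surjection between two canonical modules of Cohen--Macaulay rings, namely $\omega(S/J_{\tilde G}) \twoheadrightarrow \omega(S/(J_{\tilde G},A_1))$, both of which are Cohen--Macaulay of dimensions $m+2$ and $m+1$; the kernel of such a surjection is Cohen--Macaulay of dimension $m+2$ (the surjection is dual, via local duality, to the inclusion-induced map coming from $S/(J_{\tilde G},A_1) \hookleftarrow$ hmm — more precisely, dualize again). To get the clean isomorphism $\omega^{m+2}(\omega^{m+2}(S/J_G)) \cong (J_{\tilde G},A_1)/J_{\tilde G}$, I would apply $\omega^{m+2}(-)$ once more to the short exact sequence
\[
0 \to \omega^{m+2}(S/J_G) \to \omega(S/J_{\tilde G}) \to \omega(S/(J_{\tilde G},A_1)) \to 0,
\]
using that $\omega^{m+2}\omega(S/J_{\tilde G}) = S/J_{\tilde G}$ and $\omega^{m+2}\omega(S/(J_{\tilde G},A_1))$: note $S/(J_{\tilde G},A_1)$ has dimension $m+1 < m+2$, so $\omega^{m+2}$ of its canonical module vanishes, while $\omega^{m+1}$ of it returns $S/(J_{\tilde G},A_1)$. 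Tracking the long exact sequence of $\omega^i(-)$ applied to this last short exact sequence, together with the Serre $S_2$-property of $\omega^{m+2}(S/J_G)$ (it is Cohen--Macaulay, hence $S_2$, so $\omega^{m+2}\omega^{m+2}$ recovers it up to the double-dual, Proposition~\ref{2.5}(d)), yields
\[
0 \to S/J_{\tilde G} \to \omega^{m+2}(\omega^{m+2}(S/J_G)) \to S/(J_{\tilde G},A_1) \to 0
\]
after a degree shift, and since $S/(J_{\tilde G},A_1) = S/(J_{\tilde G}+A_1)$ this identifies $\omega^{m+2}(\omega^{m+2}(S/J_G))$ with the extension $(J_{\tilde G},A_1)/J_{\tilde G} \subset S/J_{\tilde G}$, proving (d).

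\textbf{Main obstacle.} The hard part will be the homological bookkeeping in part (d): keeping the grading shifts straight through two successive applications of $\omega^\bullet$, confirming that the relevant connecting homomorphisms vanish or are surjective (which rests on the precise list of associated primes and the reducedness of $S/J_G$), and verifying that the kernel $\omega^{m+2}(S/J_G)$ is genuinely Cohen--Macaulay of the full dimension $m+2$ rather than merely of depth one less. I expect the cleanest route is to exploit that $S/J_{\tilde G} \to S/(J_{\tilde G},A_1)$ is a surjection of Cohen--Macaulay rings whose kernel $(J_{\tilde G},A_1)/J_{\tilde G}$ is, after localizing at the top-dimensional primes, a maximal Cohen--Macaulay module, and then run local duality in the graded category to transfer this to the canonical modules.
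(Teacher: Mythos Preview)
Your overall strategy coincides with the paper's: use the short exact sequence
\[
0 \to S/J_G \to S/J_{\tilde G}\oplus S/A_1 \to S/(J_{\tilde G},A_1) \to 0
\]
and chase the long exact sequence in $\omega^\bullet$. Parts (a)--(c) are handled correctly along these lines (your argument for $\reg =2$ is a bit meandering; the paper simply reads it off from the exact sequence and Corollary~\ref{3.3}).

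However, the execution in part (d) contains a genuine error: your four--term sequence has the arrows reversed. For a short exact sequence $0\to A\to B\to C\to 0$, the long exact sequence of $\Ext^\bullet_S(-,S)$ gives
\[
\cdots \to \omega^{i}(C)\to \omega^{i}(B)\to \omega^{i}(A)\to \omega^{i-1}(C)\to \cdots,
\]
so with $A=S/J_G$ one obtains
\[
0 \to \omega^{m+2}(S/J_{\tilde G}) \to \omega^{m+2}(S/J_G) \to \omega^{m+1}\!\big(S/(J_{\tilde G},A_1)\big)\to 0,
\]
i.e.\ $\omega^{m+2}(S/J_G)$ sits in the \emph{middle}, not on the left. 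In particular it is not a kernel but an extension, and the vanishing $\omega^{m+1}(S/J_G)=0$ already falls out of the sequence (the next term is $\omega^{m+1}(S/J_{\tilde G})\oplus\omega^{m+1}(S/A_1)=0$). Your associated--primes argument via Proposition~\ref{2.5}(b) would in any case only show $\dim\omega^{m+1}(S/J_G)<m+1$, not that the module is zero.

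This reversal propagates: with the correct sequence, one only gets $\depth\omega^{m+2}(S/J_G)\geq m+1$ a priori, and the paper's actual work is to dualize once more, obtain
\[
0\to \omega^{m+2}(\omega^{m+2}(S/J_G))\to S/J_{\tilde G}\xrightarrow{f} S/(J_{\tilde G},A_1)\to \omega^{m+1}(\omega^{m+2}(S/J_G))\to 0,
\]
and then argue via the naturality square in Proposition~\ref{2.5}(d) that $f$ is the canonical surjection, hence $\omega^{m+1}(\omega^{m+2}(S/J_G))=0$ and $\omega^{m+2}(\omega^{m+2}(S/J_G))\cong\ker f=(J_{\tilde G},A_1)/J_{\tilde G}$. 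Your final displayed sequence instead places the double dual in the middle of an extension, from which the claimed identification with $(J_{\tilde G},A_1)/J_{\tilde G}$ does not follow (that quotient is a kernel, not an arbitrary extension). Fixing the direction of the arrows and inserting the naturality argument for surjectivity of $f$ repairs the proof.
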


\begin{proof} We use the short exact sequence of Corollary \ref{3.2} (2). It induces
a short exact sequence
\[
0\to H^{m+1}(S/(J_{\tilde{G}},A_1))\to H^{m+2}(S/J_{G})\to H^{m+2}(S/J_{\tilde{G}}) \to 0
\]
and an isomorphism $H^{2m}(S/J_{G})\cong H^{2m}(S/A_1).$ Moreover the Cohen-Macaulayness of $S/J_{\tilde{G}}, S/A_1$ and $S/(J_{\tilde{G}},A_1)$ of dimensions
$m+2,$ $2m$ and $m+1$ respectively imply that $H^i(S/J_G) = 0$
if $i \not\in \{m+2,2m\}$.

The short exact sequence on local cohomology induces the following exact sequence
\[
0 \to \omega^{m+2}(S/J_{\tilde{G}}) \to \omega^{m+2}(S/J_G) \to \omega^{m+1}(S/(J_{\tilde{G}},A_1)) \to 0
\]
by Local Duality. Now we apply again local cohomology and take into account that
both $\omega^{m+2}(S/J_{\tilde{G}})$ and $\omega^{m+1}(S/(J_{\tilde{G}},A_1))$  are
Cohen-Macaulay modules of dimension $m+2$ and $m+1$ respectively. Then $\depth \omega^{m+2}(S/J_{G}) \geq m+1$.
By applying local cohomology and dualizing again it induces the following  exact
sequence
\[
0 \to \omega^{m+2}(\omega^{m+2}(S/J_G)) \to S/J_{\tilde{G}}\stackrel{f}{\to} S/(J_{\tilde{G}},A_1) \to
\omega^{m+1}(\omega^{m+2}(S/J_G)) \to 0.
\]
The homomorphism $f$ is induced by the commutative diagram
\[
\begin{array}{ccc}
	S/J_{\tilde{G}} & \to & S/(J_{\tilde{G}},A_1) \\
	\downarrow & & \downarrow \\
	\omega_{2\times}(S/J_{\tilde{G}}) & \to & \omega_{2\times}(S/J_{\tilde{G}},A_1).
\end{array}
\]
Note that the vertical maps are isomorphisms (see Proposition \ref{2.5}).
Since the upper horizontal map is surjective the lower horizontal map is surjective
too.
Therefore
$\omega^{m+1}(\omega^{m+2}(S/J_G)) = 0$. That is $\depth \omega^{m+2}(S/J_G) = m+2$ and it
is a Cohen-Macaulay module. Moreover $\omega^{m+2}(\omega^{m+2}(S/J_G)) \cong (J_{\tilde{G}},A_1)/J_{\tilde{G}}.$
This finally proves the statements in (b), (c) and (d).

It is well known that $\reg S/J_G = \reg S/(J_{\tilde{G}},A_1) = 1$ and $\reg S/A_1 = 0$. Then an inspection
with the short
exact sequence of Corollary \ref{3.2} shows that $\reg S/J_G = 2$.
\end{proof}

In the next result we will consider the modules of deficiencies
of the complete bipartite graph $G = K_{m,n}, n \geq 2$.

\begin{theorem} \label{4.2} Let $m \geq n > 1$ and assume that the pair
$(m,n)$ is different from $(n+1,n)$ and $(2n-2,n)$. Then
\begin{itemize}
\item[(a)] $\reg S/J_G = 2$.
\item[(b)] $\omega^i(S/J_G) = 0$ if and only if
$i \not\in \{n+2, m+2, 2n, m+n+1, 2m\}$ and there are the
following isomorphisms and integers
\[
\begin{array}{c|ccc}
i & \omega^i(S/J_G) & \depth \omega^i(S/J_G) & \dim \omega^i(S/J_G)\\
\hline
n+2 & \omega^{n+1}(S/(J_{\tilde{G}},B_m)) & n+1 & n+1 \\
m+2 & \omega^{m+1}(S/(J_{\tilde{G}},A_n)) & m+1 & m+1 \\
2n & S/B_m(-2n) & 2n & 2n \\
n+m+1 & \omega^{m+n+1}(S/J_{\tilde{G}}) & n+m+1 & n+m+1 \\
2m & S/A_n(-2m) & 2m & 2m
\end{array}
\]
\end{itemize}
\end{theorem}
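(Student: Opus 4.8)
The plan is to run the method of Theorem~\ref{4.1} in two stages, using all three short exact sequences of Corollary~\ref{3.2} together with Local Duality (Definition~\ref{2.4}) and the Cohen--Macaulayness of the five building blocks $S/J_{\tilde{G}}$, $S/A_n$, $S/B_m$, $S/(J_{\tilde{G}},A_n)$, $S/(J_{\tilde{G}},B_m)$, whose dimensions are $n+m+1$, $2m$, $2n$, $m+1$, $n+1$ respectively. Since each block has local cohomology concentrated in a single degree, every long exact sequence of local cohomology collapses into short pieces, so one can read off $H^i(S/J_G)$ degree by degree and then dualize to the $\omega^i(S/J_G)$.

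First I would analyse the two intermediate quotients via sequences (2) and (3) of Corollary~\ref{3.2}. For $S/(J_{\tilde{G}}\cap A_n)$ the single nonzero module $H^{m+1}(S/(J_{\tilde{G}},A_n))$ produces, through the connecting homomorphism, an isomorphism $H^{m+2}(S/(J_{\tilde{G}}\cap A_n))\cong H^{m+1}(S/(J_{\tilde{G}},A_n))$, while the natural surjections onto $S/A_n$ and $S/J_{\tilde{G}}$ give $H^{2m}(S/(J_{\tilde{G}}\cap A_n))\cong H^{2m}(S/A_n)$ and $H^{n+m+1}(S/(J_{\tilde{G}}\cap A_n))\cong H^{n+m+1}(S/J_{\tilde{G}})$. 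A symmetric computation handles $S/(J_{\tilde{G}}\cap B_m)$, with nonzero cohomology exactly in degrees $n+2$, $2n$, $n+m+1$. Dualizing identifies each of these with the canonical module of a Cohen--Macaulay block, hence a Cohen--Macaulay module of the dimension and depth asserted in the table (note that the rows $n+2$ and $m+2$ have dimension $i-1$, consistent with Proposition~\ref{2.5}(a)); these are exactly the modules that must survive into $S/J_G$.

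Next I would feed these into sequence (1) of Corollary~\ref{3.2}. Away from degree $n+m+1$ the term $S/J_{\tilde{G}}$ contributes nothing, so $H^i(S/J_G)$ agrees with the corresponding summand of $H^i\bigl(S/(J_{\tilde{G}}\cap A_n)\oplus S/(J_{\tilde{G}}\cap B_m)\bigr)$, yielding the four rows $n+2$, $m+2$, $2n$, $2m$ at once. The decisive point is degree $n+m+1$: here both summands contribute a copy of $H^{n+m+1}(S/J_{\tilde{G}})$, and the map to $H^{n+m+1}(S/J_{\tilde{G}})$ induced by sequence (1) is the difference of the two natural isomorphisms found in the first stage. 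I would argue that this difference map is surjective with kernel the diagonal copy, so that $H^{n+m+1}(S/J_G)\cong H^{n+m+1}(S/J_{\tilde{G}})$ and, crucially, $H^{n+m+2}(S/J_G)=0$; this vanishing is what confines the deficiencies to the five listed degrees. Part (a) then follows by combining the bound $\reg S/J_G\leq 2$ of Corollary~\ref{3.3} with the fact that the connecting isomorphism $H^{n+2}(S/J_G)\cong H^{n+1}(S/(J_{\tilde{G}},B_m))$ has internal degree zero: the top internal degree of $H^{n+2}(S/J_G)$ thus equals the $a$-invariant $-n$ of the Cohen--Macaulay block, contributing $-n+(n+2)=2$ to the regularity.

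The main obstacle, and the reason for excluding $(n+1,n)$ and $(2n-2,n)$, is the bookkeeping of degree collisions, since the argument presupposes the long exact sequences split into the short pieces used above. The excluded pair $(n+1,n)$ is precisely where $n+m+1=2m$, so the top (canonical) degree would carry both $\omega^{n+m+1}(S/J_{\tilde{G}})$ and $S/A_n(-2m)$ simultaneously; the pair $(2n-2,n)$ is precisely where $m+2=2n$, forcing two blocks of different dimensions into one degree. In those cases the clean identification of a single Cohen--Macaulay module per degree breaks down, leaving extensions or direct sums that need separate treatment. I expect verifying the surjectivity of the difference map in degree $n+m+1$, and confirming that no disallowed collision occurs under the stated hypotheses, to be the steps requiring the most care.
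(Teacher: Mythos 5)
Your overall strategy coincides with the paper's: both arguments run the three short exact sequences of Corollary \ref{3.2} against the Cohen--Macaulayness of the five building blocks, collapse the long exact sequences of local cohomology degree by degree, and then dualize. Where you genuinely depart from the paper is at the critical degree $m+n+1$. The paper dualizes the short exact sequence
\[
0\to H^{m+n+1}(S/J_G)\to H^{m+n+1}(S/J_{\tilde{G}}\cap A_n)\oplus H^{m+n+1}(S/J_{\tilde{G}}\cap B_m)\to H^{m+n+1}(S/J_{\tilde{G}})\to 0
\]
into a sequence of canonical modules, applies local cohomology and duality a second time, and argues that the resulting map $f$ is a split surjection, so that $\omega^{m+n+1}(\omega^{m+n+1}(S/J_G))\cong S/J_{\tilde{G}}$ and hence $\omega^{m+n+1}(S/J_G)\cong\omega(S/J_{\tilde{G}})$. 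You instead observe that the Mayer--Vietoris map in degree $m+n+1$ is the difference of the two isomorphisms already produced by sequences (2) and (3), hence is surjective with kernel the graph of an isomorphism; this identifies $H^{m+n+1}(S/J_G)\cong H^{m+n+1}(S/J_{\tilde{G}})$ in one stroke, and the same surjectivity kills the connecting map into $H^{m+n+2}(S/J_G)$. This is cleaner than the paper's double-dualization argument and avoids the $S_2$-fication machinery of Proposition \ref{2.5}; likewise your explicit lower bound $\reg S/J_G\geq 2$ via the $a$-invariant $-n$ of the Cohen--Macaulay block $S/(J_{\tilde{G}},B_m)$ is more careful than the paper's ``clearly.''

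There is, however, one concrete error in your closing paragraph: the excluded pairs $(n+1,n)$ and $(2n-2,n)$ are \emph{not} precisely the degree collisions. Under the stated hypotheses the pairs $m=n$ (with $n\geq 3$, where $n+2=m+2$ and $2n=2m$) and $n=2$, $m>3$ (where $n+2=2n=4$) remain admissible and do collide. For $m=n$ your argument survives if the table is read additively, since both summands of sequence (1) then contribute in the same degree, giving direct sums such as $\omega^{n+1}(S/(J_{\tilde{G}},A_n))\oplus\omega^{n+1}(S/(J_{\tilde{G}},B_n))$ in degree $n+2$. But for $n=2$ your first-stage analysis of $S/J_{\tilde{G}}\cap B_m$ breaks down: the long exact sequence of (3) no longer degenerates at degree $4$, and one is left with a nontrivial extension
\[
0\to H^{3}(S/(J_{\tilde{G}},B_m))\to H^{4}(S/J_{\tilde{G}}\cap B_m)\to H^{4}(S/B_m)\to 0,
\]
which the paper identifies in Theorem \ref{4.x} with $H^{4}((J_{\tilde{G}},B_m)/B_m)$, dual to a \emph{four-dimensional Cohen--Macaulay} module; a split answer, as two separate rows of your table would predict, would have depth $3$. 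To be fair, the paper's own proof has the same blind spot: it computes only the range $n+1<m<2n-2$, asserts the remaining cases are ``similar,'' and quarantines $n=2$ in Theorem \ref{4.x}. So your proposal is on par with the published argument, but the sentence claiming that no disallowed collision occurs under the stated hypotheses must be replaced by an explicit treatment (or exclusion) of the cases $m=n$ and $n=2$.
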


\begin{proof}
Under the assumption of $n+1<m<2n-2$ it follows that $2m>m+n+1>2n>m+2>n+2$.
Then the short exact sequences (see Corollary \ref{3.2}) induce the following
isomorphisms:
\begin{enumerate}
\item[(1)]
$H^{n+2}(S/J_{G})\cong H^{n+2}(S/J_{\tilde{G}}\cap B_{m})\cong H^{n+1}(S/(J_{\tilde{G}},B_{m}))$,
\item[(2)] $H^{m+2}(S/J_{G})\cong H^{m+2}(S/J_{\tilde{G}}\cap A_{n})\cong H^{m+1}(S/(J_{\tilde{G}},A_{n}))$,
\item[(3)] $H^{2n}(S/J_{G})\cong H^{2n}(S/B_{m})$ and
\item[(4)] $H^{2m}(S/J_{G})\cong H^{2m}(S/A_{n})$.
\end{enumerate}
Moreover there is the following short exact sequence
\[
0\to H^{m+n+1}(S/J_{G})\to H^{m+n+1}(S/J_{\tilde{G}}\cap A_{n})\oplus H^{m+n+1}(S/J_{\tilde{G}}\cap B_{m})\to H^{m+n+1}(S/J_{\tilde{G}})\to 0
\]
and $H^i(S/J_G) = 0$ if $i \not\in \{n+2, m+2, 2n, m+n+1, 2m\}$.

Because of the short exact sequences in Corollary \ref{3.2} there are
isomorphisms
\[
H^{m+n+1}(S/J_{\tilde{G}}\cap B_{m}) \cong
H^{m+n+1}(S/J_{\tilde{G}}) \cong H^{m+n+1}(S/J_{\tilde{G}}\cap A_{n}).
\]
So by Local Duality we get a short exact sequence
\[
0 \to \omega(S/J_{\tilde{G}}) \to \omega(S/J_{\tilde{G}}) \oplus
\omega(S/J_{\tilde{G}}) \to \omega^{m+n+1}(S/J_G) \to 0.
\]
This implies  $\depth \omega(S/J_G) \geq n+m$. Moreover by applying local cohomology and again
the Local Duality we get the following commutative diagram with exact rows
\[
\begin{array}{ccccccccc}
0 \to  & S/J_G & \to & S/J_{\tilde{G}}\cap A_n \oplus S/J_{\tilde{G}} \cap B_m & \to &S/J_{\tilde{G}} & \to & 0 &   \\
 & \downarrow  &      &   \downarrow  &                                         &        \|      &   & \\
0  \to & \Omega(S/J_G) &  \to & S/J_{\tilde{G}} \oplus
S/J_{\tilde{G}} &  \stackrel{f}{\to} &  S/J_{\tilde{G}} & \to & \omega^{m+n}(\omega^{m+n+1}(S/J_G)) &  \to   0,
\end{array}
\]
where $\Omega(S/J_G) = \omega^{m+n+1}(\omega^{m+n+1}(S/J_G))$.
Now we show that $\omega^{m+n}(\omega^{m+n+1}(S/J_G)) = 0$. This follows since $f$ is easily seen to be surjective.
That is, $\omega^{m+n+1}(S/J_G)$ is a $(m+n+1)$-dimensional Cohen-Macaulay module. Moreover $f$ is a split
homomorphism and therefore $\Omega(S/J_G) \simeq S/J_{\tilde{G}}$ By duality this implies that
 $\omega^{m+n+1}(S/J_G) \cong \omega(S/J_{\tilde{G}})$. This completes the proof of the statements in (b).
 By similar arguments the other cases for $(m,n)$ different of $(n+1,n)$ and $(2n-2,n)$ can be proved. We omit the
 details.
 Clearly  $\reg S/J_G = 2$ as follows by (b). 
\end{proof}

As a next sample of our considerations let us consider the case of the complete bipartite graph $K_{m,n}$ with
$(m,n) = (n+1,n)$.

\begin{theorem} \label{4.3} Let $m = n+ 1$ and $n > 3$.  Then:
\begin{itemize}
\item[(a)] $\reg S/J_G = 2$.
\item[(b)] $\omega^i(S/J_G) = 0$ if and only if
$i \not\in \{n+2, n+3, 2n, 2n+2\}$ and there are the
following isomorphisms and integers
\[
\begin{array}{c|ccc}
i & \omega^i(S/J_G) & \depth \omega^i(S/J_G) & \dim \omega^i(S/J_G)\\
\hline
n+2 & \omega^{n+1}(S/(J_{\tilde{G}},B_{n+1})) & n+1 & n+1 \\
n+3 & \omega^{n+2}(S/(J_{\tilde{G}},A_n)) & n+2 & n+2 \\
2n & S/B_{n+1}(-2n) & 2n & 2n \\
2n+2 & \omega(S/J_{\tilde{G}}) \oplus S/A_n(-2n-2) & 2n+2 & 2n+2.
\end{array}
\]
\end{itemize}
\end{theorem}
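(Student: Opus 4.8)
The plan is to run the same machinery as in Theorem \ref{4.2} — feeding the three short exact sequences of Corollary \ref{3.2} through local cohomology — but now keeping careful track of the degeneration produced by the coincidence of dimensions. With $m=n+1$ one has $\dim S/J_{\tilde{G}}=n+m+1=2n+2$, $\dim S/A_n=2m=2n+2$, $\dim S/B_{n+1}=2n$, $\dim S/(J_{\tilde{G}},A_n)=m+1=n+2$ and $\dim S/(J_{\tilde{G}},B_{n+1})=n+1$, all Cohen-Macaulay by Corollary \ref{3.3}. The essential new feature is that the two top-dimensional primes $J_{\tilde{G}}$ and $A_n$ now share the dimension $2n+2$, so the separate degrees $m+n+1$ and $2m$ of Theorem \ref{4.2} collapse into the single degree $2n+2$. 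The hypothesis $n>3$ guarantees that the remaining four relevant degrees $n+2<n+3<2n<2n+2$ are pairwise distinct (the inequality $n+3<2n$ is exactly $n>3$), so no further interference occurs.

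First I would compute the local cohomology of the two partial intersections. Applying $H^\bullet(-)$ to sequence $(2)$ of Corollary \ref{3.2}, Cohen-Macaulayness forces $H^i(S/J_{\tilde{G}}\cap A_n)=0$ except for $H^{n+3}(S/J_{\tilde{G}}\cap A_n)\cong H^{n+2}(S/(J_{\tilde{G}},A_n))$ and $H^{2n+2}(S/J_{\tilde{G}}\cap A_n)\cong H^{2n+2}(S/J_{\tilde{G}})\oplus H^{2n+2}(S/A_n)$; likewise sequence $(3)$ gives $H^i(S/J_{\tilde{G}}\cap B_{n+1})=0$ except $H^{n+2}\cong H^{n+1}(S/(J_{\tilde{G}},B_{n+1}))$, $H^{2n}\cong H^{2n}(S/B_{n+1})$ and $H^{2n+2}\cong H^{2n+2}(S/J_{\tilde{G}})$. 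Feeding these into sequence $(1)$ and using $H^i(S/J_{\tilde{G}})=0$ for $i\ne 2n+2$, the low and middle degrees go through as isomorphisms which, dualized by Local Duality, read $\omega^{n+2}(S/J_G)\cong\omega^{n+1}(S/(J_{\tilde{G}},B_{n+1}))$, $\omega^{n+3}(S/J_G)\cong\omega^{n+2}(S/(J_{\tilde{G}},A_n))$ and $\omega^{2n}(S/J_G)\cong\omega(S/B_{n+1})\cong S/B_{n+1}(-2n)$. Each is the canonical module of a Cohen-Macaulay ring, hence Cohen-Macaulay of the dimension recorded in the table. The same bookkeeping shows $H^i(S/J_G)=0$ for $i\notin\{n+2,n+3,2n,2n+2\}$, which gives the vanishing statement of (b).

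The crux, and the main obstacle, is the top degree $2n+2$, where both $J_{\tilde{G}}$ and $A_n$ contribute. Since $H^{2n+1}(S/J_{\tilde{G}})=0$, the long exact sequence of $(1)$ produces a short exact sequence which, after dualizing and substituting the top-degree decompositions above, becomes
\[
0\to\omega(S/J_{\tilde{G}})\xrightarrow{\,g\,}\big(\omega(S/J_{\tilde{G}})\oplus\omega(S/A_n)\big)\oplus\omega(S/J_{\tilde{G}})\to\omega^{2n+2}(S/J_G)\to 0 .
\]
The task is to identify $g$, equivalently the surjection $\phi$ of which it is the Matlis dual. By the Mayer-Vietoris shape of $(1)$, $\phi$ is induced by $(\bar a,\bar b)\mapsto\bar a-\bar b$ into $S/J_{\tilde{G}}$; and by naturality of the isomorphisms coming from $(2)$ and $(3)$, the map $H^{2n+2}(S/J_{\tilde{G}}\cap A_n)\to H^{2n+2}(S/J_{\tilde{G}})$ is exactly the projection onto the first summand (the second being the image of $H^{2n+2}(S/A_n)$), while $H^{2n+2}(S/J_{\tilde{G}}\cap B_{n+1})\to H^{2n+2}(S/J_{\tilde{G}})$ is an isomorphism. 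Hence $\phi\big((u,v),w\big)=u-w$, surjective with kernel isomorphic to $H^{2n+2}(S/J_{\tilde{G}})\oplus H^{2n+2}(S/A_n)$ via $(u,v)\mapsto\big((u,v),u\big)$. Dualizing back yields $\omega^{2n+2}(S/J_G)\cong\omega(S/J_{\tilde{G}})\oplus\omega(S/A_n)$, and since $A_n$ is generated by a linear regular sequence one has $\omega(S/A_n)\cong S/A_n(-2m)=S/A_n(-2n-2)$, which is the last row of the table. The delicate point is precisely this identification of the restriction maps, showing that the two copies of $\omega(S/J_{\tilde{G}})$ cancel to a single copy while the summand $\omega(S/A_n)$ survives intact.

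Finally, part (a) follows from (b): Corollary \ref{3.3} already gives $\reg S/J_G\le 2$, and the explicit modules of deficiency attain this bound — for instance $\omega^{n+3}(S/J_G)$, the canonical module of the $1$-regular Cohen-Macaulay ring $S/(J_{\tilde{G}},A_n)$, has initial degree $n+1$ and so contributes $(n+3)-(n+1)=2$ — whence $\reg S/J_G=2$. I would then remark that the remaining cases with $m=n+1$ and $n\le 3$, where the ordering of the four degrees degenerates further, are dealt with by the same argument with the appropriate degrees merged.
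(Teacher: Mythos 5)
Your proposal is correct, and up to the identification of $\omega^{n+2}$, $\omega^{n+3}$ and $\omega^{2n}$ it follows the paper's proof: the three sequences of Corollary \ref{3.2} are fed through local cohomology, the Cohen--Macaulayness of the five constituent rings together with the ordering $n+2<n+3<2n<2n+2$ (this is exactly where $n>3$ enters) yields the vanishing statement and the isomorphisms, and Local Duality converts these into the first three rows of the table. (Incidentally, your vanishing $H^i(S/J_{\tilde{G}}\cap A_n)=0$ for $i\neq n+3,\,2n+2$ is the correct statement; the paper's item (3) misprints it as $i\neq n+2,\,2n+2$.) Where you genuinely depart from the paper is the top degree $2n+2$. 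The paper dualizes the short exact sequence, obtains $\depth\omega(S/J_G)\geq 2n+1$, applies local cohomology and dualizes a \emph{second} time, shows the resulting map $f\colon S/J_{\tilde{G}}\oplus S/A_n\oplus S/J_{\tilde{G}}\to S/J_{\tilde{G}}$ is surjective (via the bidualization isomorphisms of Proposition \ref{2.5}, as in Theorem \ref{4.1}), concludes that $\omega(S/J_G)$ is Cohen--Macaulay, and only then splits $f$ to get $\omega_{2\times}(S/J_G)\cong S/J_{\tilde{G}}\oplus S/A_n$ and hence the last row. You instead identify the maps in the cohomology-level short exact sequence directly: since the natural surjection $S/J_{\tilde{G}}\cap A_n\to S/J_{\tilde{G}}$ factors through $S/J_{\tilde{G}}\oplus S/A_n$, functoriality shows that under the decomposition coming from sequence (2) it induces the first projection on $H^{2n+2}$, while the map from $H^{2n+2}(S/J_{\tilde{G}}\cap B_{n+1})$ is an isomorphism; the kernel of $((u,v),w)\mapsto u-w$ is then visibly $H^{2n+2}(S/J_{\tilde{G}})\oplus H^{2n+2}(S/A_n)$, and a single dualization gives $\omega^{2n+2}(S/J_G)\cong\omega(S/J_{\tilde{G}})\oplus S/A_n(-2n-2)$. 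This is sound and more elementary, avoiding the double dualization and the $S_2$-fication machinery altogether; but it exploits something special to this case, namely that in degree $2n+2$ no connecting homomorphism occurs, so all maps are induced by ring surjections and the splitting can be read off. The paper's double-dualization pattern is less transparent here but uniform: the same argument handles Theorems \ref{4.1} and \ref{4.x}, where the analogous sequence involves a genuine connecting homomorphism and cannot split (there the top deficiency module is a nonsplit extension, Cohen--Macaulay but not a direct sum). Finally, your derivation of (a) is more complete than the paper's ``Clearly $\reg S/J_G=2$'': you combine the bound $\reg S/J_G\leq 2$ of Corollary \ref{3.3} with the explicit deficiency module in degree $n+3$, which attains it.
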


\begin{proof}
 By  applying the local cohomology functors $H^{\cdot}(-)$ to the exact sequence (2) in Corollary \ref{3.2}  we get the
 following:
 \begin{itemize}
 \item[(1)] $H^{n+3}(S/J_{\tilde{G}}\cap A_{n})\cong H^{n+2}(S/(J_{\tilde{G}},A_{n})),$
 \item[(2)] $H^{2n+2}(S/J_{\tilde{G}}\cap A_{n})\cong H^{2n+2}(S/J_{\tilde{G}})\oplus H^{2n+2}(S/A_{n})$ and
\item[(3)] $H^{i}(S/J_{\tilde{G}}\cap A_{n})=0$ for $i \not= n+2, 2n+2$.
\end{itemize}
Similarly,  if we apply $H^{\cdot}(-)$ to the exact sequence (3) in Corollary \ref{3.2}  we get
\begin{itemize}
\item[(4)] $H^{n+2}(S/J_{\tilde{G}}\cap B_{n+1})\cong H^{n+1}(S/(J_{\tilde{G}},B_{n+1}))$.
 \item[(5)] $H^{2n}(S/J_{\tilde{G}}\cap B_{n+1})\cong H^{2n}(S/B_{n+1})$.
 \item[(6)] $H^{2n+2}(S/J_{\tilde{G}}\cap B_{n+1})\cong H^{2n+2}(S/J_{\tilde{G}})$.
 \item[(7)] $H^{i}(S/J_{\tilde{G}}\cap B_{n+1})=0 $ for $i \not= n+2,2n, 2n+2$.
\end{itemize}
With these results in mind the short exact sequence (1) of Corollary \ref{3.2} provides (by applying the local
cohomology functor) the vanishing $H^i(S/J_G) = 0$ for all $i \not= n+2, n+3,2n, 2n+2$. Moreover it induces
isomorphisms
\[
H^{n+2}(S/J_{G})\cong H^{n+1}(S/(J_{\tilde{G}},B_{n+1})) \textrm{ and }  H^{2n}(S/J_{G})\cong H^{2n}(S/B_{n+1})
\]
and as $n>3$ so $2n>n+3$ the isomorphism $H^{n+3}(S/J_{G})\cong  H^{n+2}(S/(J_{\tilde{G}},A_{n}))$. Moreover
we obtain the following short exact sequence
\[
0\to H^{2n+2}(S/J_{G})\to H^{2n+2}(S/J_{\tilde{G}})\oplus H^{2n+2}(S/A_{n})\oplus H^{2n+2}(S/J_{\tilde{G}})\to H^{2n+2}(S/J_{\tilde{G}})\to 0.
\]
By Local Duality this proves the first three rows in the table of statement (b). By Local Duality we get also the
the following short exact sequence
\[
0\to \omega(S/J_{\tilde{G}})\to \omega(S/J_{\tilde{G}})\oplus \omega(S/A_{n})\oplus \omega(S/J_{\tilde{G}})\to \omega(S/J_{G})\to 0.
\]
Note that we may write $\omega$ instead of $\omega^{2n+2}$ because all modules above are canonical modules.
First of all the short exact sequence provides that $\depth \omega(S/J_{G}) \geq 2n+1$.
By applying local cohomology and dualizing again  we get the following exact sequence
\[
0\to \omega_{2\times}(S/J_{G})\to S/J_{\tilde{G}}\oplus S/A_{n}\oplus S/J_{\tilde{G}} \stackrel{f}{\to} S/J_{\tilde{G}}\to
\omega^{2n+1}(\omega(S/J_G)) \to 0.
\]
As in the proof of Theorem \ref{4.1} we see that $f$ is surjective. Therefore
$\omega^{2n+1}(\omega(S/J_G)) =0$ and $\depth \omega(S/J_G) = 2n+2$. Whence $\omega(S/J_G)$ is a
$(2n+2)$-dimensional Cohen-Macaulay module. Then $f$ is a split surjection and
$\omega_{2\times}(S/J_{G})\cong S/J_{\tilde{G}}\oplus S/A_{n}$. This  implies the isomorphism  $\omega(S/J_{G})\cong \omega(S/J_{\tilde{G}})\oplus\omega(S/A_{n})$ and this finishes the proof of (b).
Clearly  $\reg S/J_G = 2$.
\end{proof}

\begin{theorem} \label{4.x} Let $m >3$ and $n =2$.  Then:
\begin{itemize}
\item[(a)] $\reg S/J_G = 2$.
\item[(b)] $\omega^i(S/J_G) = 0$ if and only if
$i \not\in \{4,m+2,m+3,2m\}$ and there are the
following isomorphisms and integers
\[
\begin{array}{c|ccc}
i & \omega^i(S/J_G) & \depth \omega^i(S/J_G) & \dim \omega^i(S/J_G)\\
\hline
4 & \omega^4((J_{\tilde{G}},B_m)/B_m) & 4 & 4 \\
m+2 & \omega^{m+1}(S/(J_{\tilde{G}},A_2)) & m+1 & m+1 \\
m+3 & \omega(S/J_{\tilde{G}}) & m+3 & m+3 \\
2m & S/A_2(-2m) & 2m & 2m.
\end{array}
\]
Moreover there is an isomorphism $\omega^{4}(\omega^{4}(S/J_G)\cong (J_{\tilde{G}},B_m)/B_m$.
\end{itemize}
\end{theorem}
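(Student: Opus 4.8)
The plan is to run the same machine as in Theorems \ref{4.1}--\ref{4.3}: apply the local cohomology functor $H^{\cdot}(-)$ to the three short exact sequences of Corollary \ref{3.2}, read off the modules $H^i(S/J_G)$, and then pass to the modules of deficiency by Local Duality. What makes the case $n=2$ genuinely different is the coincidence $n+2 = 2n = 4$. In the generic range of Theorem \ref{4.2} the contribution of $S/B_m$ (in cohomological degree $2n$) and that of $S/(J_{\tilde{G}},B_m)$ (in degree $n+2$) live in two distinct degrees, whereas for $n=2$ they collapse into the single module $H^{4}(S/J_G)$. Recognising this combined module as the deficiency module of one Cohen-Macaulay module is the real content of the theorem, and the main obstacle.

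First I would record, as before, the Cohen-Macaulay building blocks and their dimensions: $S/J_{\tilde{G}}$ of dimension $m+3$, $S/A_2$ of dimension $2m$, $S/B_m$ of dimension $4$, $S/(J_{\tilde{G}},A_2)$ of dimension $m+1$ and $S/(J_{\tilde{G}},B_m)$ of dimension $n+1=3$. Applying $H^{\cdot}(-)$ to sequence (2) of Corollary \ref{3.2} gives $H^{m+2}(S/J_{\tilde{G}}\cap A_2)\cong H^{m+1}(S/(J_{\tilde{G}},A_2))$, $H^{m+3}(S/J_{\tilde{G}}\cap A_2)\cong H^{m+3}(S/J_{\tilde{G}})$, $H^{2m}(S/J_{\tilde{G}}\cap A_2)\cong H^{2m}(S/A_2)$ and zero otherwise; applying it to sequence (3) gives $H^{m+3}(S/J_{\tilde{G}}\cap B_m)\cong H^{m+3}(S/J_{\tilde{G}})$ together with a single module $H^{4}(S/J_{\tilde{G}}\cap B_m)$ assembled from $H^{3}(S/(J_{\tilde{G}},B_m))$ and $H^{4}(S/B_m)$. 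Feeding these into sequence (1) and using $m>3$ (so that $4,m+2,m+3,2m$ are pairwise distinct) produces at once the three ``diagonal'' rows: $H^{m+2}(S/J_G)\cong H^{m+1}(S/(J_{\tilde{G}},A_2))$, $H^{2m}(S/J_G)\cong H^{2m}(S/A_2)$, and $H^{m+3}(S/J_G)\cong H^{m+3}(S/J_{\tilde{G}})$, the last because the Mayer--Vietoris map on top cohomology $H^{m+3}(S/J_{\tilde{G}}\cap A_2)\oplus H^{m+3}(S/J_{\tilde{G}}\cap B_m)\to H^{m+3}(S/J_{\tilde{G}})$ is the difference of two isomorphisms, hence surjective with diagonal kernel. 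Dualising yields the rows $i=m+2,m+3,2m$ of the table, each the canonical module of a Cohen-Macaulay ring and so Cohen-Macaulay of the stated dimension.

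The crux is the row $i=4$, where sequence (1) gives $H^{4}(S/J_G)\cong H^{4}(S/J_{\tilde{G}}\cap B_m)$. The key observation I would use is that the image of $J_{\tilde{G}}$ in $S/B_m = K[x_1,x_2,y_1,y_2]$ is the principal ideal generated by $x_1y_2-x_2y_1$, since every generator of $J_{\tilde{G}}$ involving an index $\geq 3$ dies modulo $B_m$. As $S/B_m$ is a domain, this ideal is generated by a nonzerodivisor, so $(J_{\tilde{G}},B_m)/B_m\cong (S/B_m)(-2)$ is Cohen-Macaulay of dimension $4$. Combining the isomorphism $(J_{\tilde{G}},B_m)/B_m\cong J_{\tilde{G}}/(J_{\tilde{G}}\cap B_m)$ with the short exact sequence $0\to (J_{\tilde{G}},B_m)/B_m\to S/(J_{\tilde{G}}\cap B_m)\to S/J_{\tilde{G}}\to 0$ and the vanishing of $H^{3}(S/J_{\tilde{G}})$ and $H^{4}(S/J_{\tilde{G}})$, I obtain $H^{4}(S/J_{\tilde{G}}\cap B_m)\cong H^{4}((J_{\tilde{G}},B_m)/B_m)$, whence by Local Duality $\omega^{4}(S/J_G)\cong \omega^{4}((J_{\tilde{G}},B_m)/B_m)$; the latter is the canonical module of a Cohen-Macaulay module, hence Cohen-Macaulay of dimension $4$. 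For the ``Moreover'' clause I would invoke Proposition \ref{2.5}(d): since $(J_{\tilde{G}},B_m)/B_m$ is Cohen-Macaulay it satisfies $S_2$, so the biduality map is an isomorphism and $\omega^{4}(\omega^{4}(S/J_G))\cong \omega^{4}(\omega^{4}((J_{\tilde{G}},B_m)/B_m))\cong (J_{\tilde{G}},B_m)/B_m$.

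Finally, part (a) follows from (b): Corollary \ref{3.3} already gives $\reg S/J_G\leq 2$, and the row $i=4$ forces equality, because $H^{4}(S/J_G)\cong H^{4}(S/B_m)(-2)$ has top nonzero degree $e(H^{4}(S/J_G))=-2$, so $e(H^{4}(S/J_G))+4=2$. Hence $\reg S/J_G=2$.
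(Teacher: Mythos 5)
Your proof is correct, and while the outer framework (applying $H^{\cdot}(-)$ to the three sequences of Corollary \ref{3.2} and then dualizing) is the same as the paper's, your treatment of the crucial row $i=4$ is genuinely different. The paper never uses the fact that $(J_{\tilde{G}},B_m)/B_m$ is free over $S/B_m$: it dualizes the exact sequence $0 \to H^3(S/(J_{\tilde{G}},B_m)) \to H^4(S/J_{\tilde{G}}\cap B_m) \to H^4(S/B_m) \to 0$ to get $\depth \omega^4(S/J_{\tilde{G}}\cap B_m) \geq 3$, then dualizes a second time to obtain the four-term sequence
\[
0 \to \omega^4(\omega^4(S/J_{\tilde{G}}\cap B_m)) \to S/B_m \stackrel{f}{\to} S/(J_{\tilde{G}},B_m) \to \omega^3(\omega^4(S/J_{\tilde{G}}\cap B_m)) \to 0,
\]
and concludes Cohen-Macaulayness from the surjectivity of $f$, identifying $\omega^4(\omega^4(S/J_{\tilde{G}}\cap B_m)) \cong \ker f = (J_{\tilde{G}},B_m)/B_m$; the table entry and the ``Moreover'' clause are read off from this bidual computation. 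You instead observe that modulo $B_m$ only the generator $x_1y_2-x_2y_1$ of $J_{\tilde{G}}$ survives, so that $(J_{\tilde{G}},B_m)/B_m \cong (S/B_m)(-2)$ is itself a $4$-dimensional Cohen-Macaulay module, and you transfer cohomology through the different short exact sequence $0 \to (J_{\tilde{G}},B_m)/B_m \to S/(J_{\tilde{G}}\cap B_m) \to S/J_{\tilde{G}} \to 0$; the ``Moreover'' clause then comes for free from Proposition \ref{2.5}(d) applied to a Cohen-Macaulay (hence $S_2$) module. Your route is shorter and more explicit --- it pins down $\omega^4(S/J_G)$ as a twist of $S/B_m$ and gives an honest proof of part (a), namely $\reg S/J_G = 2$ via $e(H^4(S/J_G)) + 4 = 2$, which the paper asserts without detail --- but it exploits a coincidence special to $n=2$ (the kernel module being principal), whereas the paper's double-dualization argument is the uniform machine that also runs in Theorems \ref{4.1} and \ref{4.3}. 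One small point common to both proofs: when $m=4$ one has $2m = m+4$, so deducing $H^{2m}(S/J_G) \cong H^{2m}(S/A_2)$ from sequence (1) requires the connecting map out of $H^{m+3}(S/J_{\tilde{G}})$ to vanish; your remark that the Mayer--Vietoris map is a difference of two isomorphisms, hence surjective, supplies exactly this, so you are covered.
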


\begin{proof}
 By  applying the local cohomology functors $H^{\cdot}(-)$ to the exact sequence (2) in Corollary \ref{3.2} we get the
 following:
 \begin{itemize}
 \item[(1)] $H^{m+2}(S/J_{\tilde{G}}\cap A_{2})\cong H^{m+1}(S/(J_{\tilde{G}},A_{2})),$
 \item[(2)] $H^{m+3}(S/J_{\tilde{G}}\cap A_{2})\cong H^{m+3}(S/J_{\tilde{G}})$,
\item[(3)] $H^{2m}(S/J_{\tilde{G}}\cap A_{2})\cong H^{2m}(S/A_{2})$ and
\item[(4)] $H^{i}(S/J_{\tilde{G}}\cap A_{2})=0$ for $i \not= m+2, m+3, 2m $.
\end{itemize}
Similarly,  if we apply $H^{\cdot}(-)$ to the exact sequence (3) in Corollary \ref{3.2}  we get the isomorphism $H^{m+3}(S/J_{\tilde{G}}\cap B_{m})\cong H^{m+3}(S/J_{\tilde{G}})$ and the exact sequence
\[
0\to H^{3}(S/(J_{\tilde{G}},B_m))\to H^{4}(S/J_{\tilde{G}}\cap B_m)\to H^{4}(S/B_m)\to 0.
\]
The short exact sequence on local cohomology induces the following exact sequence
\[
0 \to \omega^{4}(S/B_m) \to \omega^{4}(S/J_{\tilde{G}}\cap B_m) \to \omega^{3}(S/(J_{\tilde{G}},B_m)) \to 0
\]
by Local Duality. Now we apply again local cohomology and taking into account that
both $\omega^{4}(S/B_m)$ and $\omega^{3}(S/(J_{\tilde{G}},B_m))$  are
Cohen-Macaulay modules of dimension $4$ and $3$ respectively. Then $\depth \omega^{4}(S/J_{\tilde{G}}\cap B_m)) \geq 3$.
By applying local cohomology and dualizing again it induces the following  exact
sequence
\[
0 \to \omega^{4}(\omega^{4}(S/J_{\tilde{G}}\cap B_m)) \to S/B_m \stackrel{f}{\to} S/(J_{\tilde{G}},B_m) \to
\omega^{3}(\omega^{4}(S/J_{\tilde{G}}\cap B_m)) \to 0.
\]
Now the homomorphism $f$ is an epimorphism. 
$\omega^{3}(\omega^{4}(S/J_{\tilde{G}}\cap B_m)) = 0$. That is $\depth \omega^{4}(S/J_{\tilde{G}}\cap B_m) =4$ and it
is a Cohen-Macaulay module. Moreover $\omega^{4}(\omega^{4}(S/J_{\tilde{G}}\cap B_m)) \cong (J_{\tilde{G}},B_m)/B_m.$
With these results in mind the short exact sequence (1) of Corollary \ref{3.2} provides (by applying the local
cohomology functor) the vanishing $H^i(S/J_G) = 0$ for all $i \not= 4, m+2, m+3, 2m$. Moreover it induces
isomorphisms
\[
H^{4}(S/J_{G})\cong H^{4}(S/J_{\tilde{G}}\cap B_m), H^{m+2}(S/J_{G})\cong  H^{m+1}(S/(J_{\tilde{G}},A_{2}))
\]
and $H^{2m}(S/J_{G})\cong  H^{2m}(S/A_{2})$. Moreover
we obtain the following short exact sequence
\[
0\to H^{m+3}(S/J_{G})\to H^{m+3}(S/J_{\tilde{G}})\oplus H^{m+3}(S/J_{\tilde{G}})\to H^{m+3}(S/J_{\tilde{G}})\to 0.
\]
 This  implies the isomorphism  $\omega^{m+3}(S/J_{G})\cong \omega(S/J_{\tilde{G}})$.
\end{proof}
As a final step we shall consider the case of the complete bipartite graph $K_{m,n}$ with $2 n = m+2$.
In all of the previous examples we have the phenomenon that $\omega^i(S/J_G)$ is either zero or
a Cohen-Macaulay module with $i-1 \leq \dim\omega^i(S/J_G) \leq i$ for all $i \in \mathbb{Z}$. and the
canonical module $\omega(S/J_G) =\omega^d(S/J_G), d = \dim S/J_G,$ is a $d$-dimensional
Cohen-Macaulay module. For $2n = m+2$ this is no longer true.

\begin{theorem} \label{4.4} Let $m +2 = 2n$ and $m>n+1$.  Then:
\begin{itemize}
\item[(a)] $\reg S/J_G = 2$.
\item[(b)] $\omega^i(S/J_G) = 0$ if and only if
$i \not\in \{n+2, m+2 = 2n, m+n+1, 2m\}$ and there are the
following isomorphisms and integers
\[
\begin{array}{c|ccc}
i & \omega^i(S/J_G) & \depth \omega^i(S/J_G) & \dim \omega^i(S/J_G)\\
\hline
n+2 & \omega^{n+1}(S/(J_{\tilde{G}},B_m)) & n+1 & n+1 \\
m+2 & \omega^{m+1}(S/(J_{\tilde{G}},A_n)) \oplus S/B_m(-2n) & m+1& m+2 \\
m+n+1 & \omega(S/J_{\tilde{G}})  & m+n+1 & m+n+1 \\
2m & S/A_n(-2m) & 2m & 2m.
\end{array}
\]
\end{itemize}
\end{theorem}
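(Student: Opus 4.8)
The plan is to run the same machine as in Theorems \ref{4.2}--\ref{4.x}: feed the three short exact sequences of Corollary \ref{3.2} into the local cohomology functor $H^{\cdot}(-)$ and exploit that each of the building blocks $S/J_{\tilde{G}}$, $S/A_n$, $S/B_m$, $S/(J_{\tilde{G}},A_n)$, $S/(J_{\tilde{G}},B_m)$ is Cohen-Macaulay, so that its local cohomology is concentrated in the single degree equal to its dimension, namely $m+n+1$, $2m$, $2n$, $m+1$ and $n+1$ respectively. The defining hypotheses $m+2 = 2n$ and $m > n+1$ force $m = 2n-2$ and $n \geq 4$, and the five degrees occurring in the generic case (Theorem \ref{4.2}) now collapse to four: one checks $2m > m+n+1 > 2n = m+2 > n+2$. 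The whole point is that the indices $m+2$ and $2n$ coincide, so the module of deficiency at that index will receive contributions from both the $A_n$-side and the $B_m$-side.

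First I would apply $H^{\cdot}(-)$ to the sequences $(2)$ and $(3)$ of Corollary \ref{3.2}. Using the concentration of cohomology of the Cohen-Macaulay building blocks, the long exact sequence of $(2)$ yields $H^{m+2}(S/J_{\tilde{G}}\cap A_n) \cong H^{m+1}(S/(J_{\tilde{G}},A_n))$, $H^{m+n+1}(S/J_{\tilde{G}}\cap A_n) \cong H^{m+n+1}(S/J_{\tilde{G}})$, $H^{2m}(S/J_{\tilde{G}}\cap A_n)\cong H^{2m}(S/A_n)$ and vanishing elsewhere; likewise $(3)$ gives $H^{n+2}(S/J_{\tilde{G}}\cap B_m)\cong H^{n+1}(S/(J_{\tilde{G}},B_m))$, $H^{2n}(S/J_{\tilde{G}}\cap B_m)\cong H^{2n}(S/B_m)$, $H^{m+n+1}(S/J_{\tilde{G}}\cap B_m)\cong H^{m+n+1}(S/J_{\tilde{G}})$ and vanishing elsewhere. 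Here the crucial coincidence $m+2 = 2n$ means that the $A_n$-contribution $H^{m+1}(S/(J_{\tilde{G}},A_n))$ and the $B_m$-contribution $H^{2n}(S/B_m)$ sit in the same degree $2n$.

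Next I would feed these into the sequence $(1)$. Since $S/J_{\tilde{G}}$ has cohomology only in degree $m+n+1$, the flanking terms $H^{i-1}(S/J_{\tilde{G}})$ and $H^i(S/J_{\tilde{G}})$ vanish at $i = n+2, 2n, 2m$, so one reads off $H^{n+2}(S/J_G)\cong H^{n+1}(S/(J_{\tilde{G}},B_m))$, the direct sum $H^{2n}(S/J_G)\cong H^{m+1}(S/(J_{\tilde{G}},A_n))\oplus H^{2n}(S/B_m)$, and $H^{2m}(S/J_G)\cong H^{2m}(S/A_n)$. Dualizing by Local Duality and identifying the canonical modules $\omega^{2n}(S/B_m)\cong S/B_m(-2n)$ and $\omega^{2m}(S/A_n)\cong S/A_n(-2m)$ gives rows $1$, $2$ and $4$ of the table. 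In row $2$ the summand $\omega^{m+1}(S/(J_{\tilde{G}},A_n))$ is Cohen-Macaulay of dimension $m+1$ while $S/B_m(-2n)$ is Cohen-Macaulay of dimension $2n = m+2$, so their direct sum has depth $m+1$ and dimension $m+2$; this is precisely the announced failure of Cohen-Macaulayness at the index $m+2 = 2n$. At the top degree $i = m+n+1$ the sequence $(1)$ gives $0 \to H^{m+n+1}(S/J_G) \to H^{m+n+1}(S/J_{\tilde{G}})\oplus H^{m+n+1}(S/J_{\tilde{G}}) \stackrel{f}{\to} H^{m+n+1}(S/J_{\tilde{G}})$, where $f$ is induced by the difference of the two natural surjections and is therefore surjective; hence $H^{m+n+1}(S/J_G)$ is the kernel, which is isomorphic to $H^{m+n+1}(S/J_{\tilde{G}})$, giving row $3$, that is $\omega^{m+n+1}(S/J_G)\cong \omega(S/J_{\tilde{G}})$, a Cohen-Macaulay module.

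The point that needs care is the bookkeeping at the borderline value $n = 4$ (that is $m = 6$), where $2m-1 = m+n+1$ and $2m = m+n+2$. There the connecting homomorphism emanating from $H^{m+n+1}(S/J_{\tilde{G}})$ could a priori contribute a second term to $H^{2m}(S/J_G)$ and produce spurious cohomology in degree $m+n+2$. The surjectivity of $f$ established above forces this connecting map to vanish, so $H^{2m}(S/J_G)\cong H^{2m}(S/A_n)$ survives unperturbed and no cohomology appears outside $\{n+2, 2n, m+n+1, 2m\}$; this simultaneously yields the claimed vanishing in $(b)$. Finally, $\reg S/J_G \leq 2$ by Corollary \ref{3.3}, and the gradings of the deficiency modules just computed (equivalently an inspection of the short exact sequences, whose remaining terms have regularity $1$ or $0$) pin down $\reg S/J_G = 2$, proving $(a)$.
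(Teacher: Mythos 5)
Your proof is correct and follows essentially the same route as the paper: you apply the local cohomology functor to the three exact sequences of Corollary \ref{3.2}, exploit the Cohen-Macaulayness (hence concentration of cohomology) of $S/J_{\tilde{G}}$, $S/A_n$, $S/B_m$, $S/(J_{\tilde{G}},A_n)$, $S/(J_{\tilde{G}},B_m)$, obtain exactly the paper's isomorphisms for $H^i(S/J_G)$ at $i = n+2,\, 2n=m+2,\, m+n+1,\, 2m$, and then dualize via Local Duality. If anything, you are more careful than the paper's own terse proof, since you explicitly establish the surjectivity of $f$ to kill the connecting homomorphism into degree $m+n+2$ (needed both for the vanishing statement and for the borderline case $n=4$, $m=6$ where $2m = m+n+2$), a point the paper passes over in silence.
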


\begin{proof}
It is easily seen that $n +2 < 2n = m+2 < m+n+1 < 2m$. Then the short exact sequences of Corollary
\ref{3.2} provide that $H^i(S/J_G) = 0$ for all $i \not= n+2, m+2 = 2n, m+n+1, 2m$. Moreover, it induces the
following isomorphisms
\begin{itemize}
\item[(1)] $H^{n+2}(S/J_G) \cong H^{n+1}(S/(J_{\tilde{G}},B_m))$,
\item[(2)] $H^{m+2}(S/J_G) \cong H^{m+1}(S/(J_{\tilde{G}},A_n) \oplus H^{m+2}(S/B_m)$,
\item[(3)] $H^{m+n+1}(S/J_G) \cong H^{m+n+1}(S/J_{\tilde{G}})$ and
\item[(4)] $H^{2m}(S/J_G) \cong H^{2m}(S/A_n)$.
\end{itemize}
This easily yields the statements in (a) and (b).
 \end{proof}

 The difference of the case handled in Theorem \ref{4.4} is the fact the $\omega^{m+2}(S/J_G)$ is not a
 Cohen-Macaulay module. It is the direct sum of two Cohen-Macaulay modules of dimensions
 $m+2$ and $m+1$ respectively.  In \cite{Sch2} a finitely generated $S$-module $M$ is called
 canonically Cohen-Macaulay module whenever $\omega(M)$ is a Cohen-Macaulay module. Note that if
 $M$ is a Cohen-Macaulay module, then it is also a Cohen-Macaulay canonical module. The converse
 is not true.

Now we prove some corollaries about the Cohen-Macaulayness and related properties.

\begin{corollary} \label{4.5}  Let $J_G \subset S$ denote the binomial edge ideal of a complete
 bipartite graph.
\begin{itemize}
\item[(a)] $S/J_G$ is a Cohen-Macaulay canonical ring and $\depth \omega^i(S/J_G)
 \geq i-1$ for all $\depth S/J_G \leq i \leq \dim S/J_G$. Moreover $S/J_G$ is a Cohen-Macaulay ring
 if and only if $(m,n) \in \{(2,1) (1,1)\}$.
 \item[(b)] $S/J_G$ is sequentially Cohen-Macaulay and not Cohen-Macaulay if and only if $n=1$ and $m>2$ or $n=m=2$.
 \end{itemize}
\end{corollary}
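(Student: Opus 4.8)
The plan is to read off both statements directly from the explicit lists of deficiency modules in Theorems \ref{4.1}, \ref{4.2}, \ref{4.3}, \ref{4.x} and \ref{4.4}, combined with the characterization of sequential Cohen-Macaulayness in Theorem \ref{2.7} and the definition of a Cohen-Macaulay canonical ring in the sense of \cite{Sch2}. First I would settle the two exceptional pairs: for $(m,n)\in\{(1,1),(2,1)\}$ the ideal $J_G$ is generated by one respectively two quadrics whose number equals the height of $J_G$, hence by a regular sequence, so $S/J_G$ is a complete intersection and in particular Cohen-Macaulay. For every other pair one of the cited theorems applies and its table exhibits a nonzero module $\omega^i(S/J_G)$ with $i<d$, where $d=\dim S/J_G$; since $\omega^i(S/J_G)\cong\Hom_K(H^i(S/J_G),K)$, this forces $H^i(S/J_G)\neq 0$ for some $i<d$, so $S/J_G$ is not Cohen-Macaulay. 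Together this proves the last assertion of (a).

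For the Cohen-Macaulay canonical property I would inspect the top row of every table: the canonical module $\omega^d(S/J_G)$ is in each case either a shifted cyclic module such as $S/A_n(-2m)$, or $\omega(S/J_{\tilde{G}})$, or---in the case $m=n+1$ of Theorem \ref{4.3}---the direct sum $\omega(S/J_{\tilde{G}})\oplus S/A_n(-2n-2)$ of two Cohen-Macaulay modules of the \emph{same} dimension $d$. In every situation $\omega^d(S/J_G)$ is Cohen-Macaulay, which is exactly the definition of $S/J_G$ being Cohen-Macaulay canonical. The depth estimate $\depth\omega^i(S/J_G)\geq i-1$ on the relevant range is equally immediate from the depth columns: each nonzero $\omega^i(S/J_G)$ is either an $i$-dimensional Cohen-Macaulay module, of depth $i$, or---only in the case $2n=m+2$ of Theorem \ref{4.4}---a sum of Cohen-Macaulay modules of dimensions $i$ and $i-1$, of depth $i-1$. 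This finishes (a).

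For (b) I would invoke Theorem \ref{2.7}, by which $S/J_G$ is sequentially Cohen-Macaulay exactly when every nonzero $\omega^i(S/J_G)$ is Cohen-Macaulay of dimension precisely $i$. Scanning the dimension columns, this fails as soon as some deficiency module has a component of dimension $i-1$, and such a drop is present in each of Theorems \ref{4.2}, \ref{4.3}, \ref{4.x} and \ref{4.4}: there one finds $\omega^{n+2}(S/J_G)\cong\omega^{n+1}(S/(J_{\tilde{G}},B_m))$ (or the analogous $\omega^{m+2}$ entry) of dimension $i-1$. By contrast, in the star case $n=1$, $m>2$ of Theorem \ref{4.1} both nonzero modules $\omega^{m+2}$ and $\omega^{2m}$ are $i$-dimensional Cohen-Macaulay, so $S/J_G$ is sequentially Cohen-Macaulay but not Cohen-Macaulay. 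Combining this with the non-Cohen-Macaulay criterion of (a) yields the claimed equivalence, provided the few remaining small pairs are also accounted for.

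The main obstacle is precisely those small pairs lying outside the hypotheses of Theorems \ref{4.2}--\ref{4.4}: chiefly $(m,n)=(2,2)$ and the pairs $(n+1,n)$ with $n\in\{2,3\}$. For each I would feed the three exact sequences of Corollary \ref{3.2} into the local cohomology functor by hand, exactly as in the proofs of Theorems \ref{4.1}--\ref{4.4}, and compute all $\omega^i(S/J_G)$ explicitly. I expect the symmetric pair $(2,2)$ to behave like the star case, with every nonzero deficiency module $i$-dimensional Cohen-Macaulay, hence sequentially Cohen-Macaulay and not Cohen-Macaulay, while $(3,2)$ and $(4,3)$ should again display a dimension drop and so fail to be sequentially Cohen-Macaulay. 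Carrying out the computation for $n=m=2$ is the one genuinely separate piece of work; everything else is bookkeeping over the already established tables.
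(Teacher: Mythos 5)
Your proposal is correct and is in substance the paper's own argument: read off the structure of the modules $\omega^i(S/J_G)$ from the tables of Theorems \ref{4.1}, \ref{4.2}, \ref{4.3}, \ref{4.x} and \ref{4.4}, apply Theorem \ref{2.7} to decide sequential Cohen-Macaulayness, and treat $(1,1)$ and $(2,1)$ as complete intersections. Where you genuinely go beyond the paper is your last paragraph: the pairs $(2,2)$, $(3,2)$ and $(4,3)$ satisfy the hypotheses of none of Theorems \ref{4.2}, \ref{4.3}, \ref{4.x}, \ref{4.4} (each is of the form $(n+1,n)$ or $(2n-2,n)$ with $n\leq 3$), and the paper's proof of the corollary passes over them silently with the phrase ``for all possible bipartite graphs''. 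This is not a harmless omission, since $n=m=2$ is one of the two families occurring in the equivalence of part (b), so the corollary literally cannot be deduced from the quoted theorems alone; your plan to run the sequences of Corollary \ref{3.2} through local cohomology by hand for these three pairs is exactly the needed repair, and your predictions are the correct outcomes: for $(2,2)$ the two nonvanishing deficiency modules $\omega^4(S/J_G)$ and $\omega^5(S/J_G)\cong\omega(S/J_{\tilde{G}})$ are Cohen-Macaulay of dimensions $4$ and $5$ (so $S/J_G$ is sequentially Cohen-Macaulay and not Cohen-Macaulay), while $(3,2)$ and $(4,3)$ produce a deficiency module of dimension $i-1$ and hence fail sequential Cohen-Macaulayness. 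Two minor remarks: for the Cohen-Macaulay characterization in (a) you can bypass the tables (and the uncovered pairs) altogether, since Lemma \ref{3.1} and Corollary \ref{3.3} give $\dim S/J_G$ and $\depth S/J_G$ for every $(m,n)$, and comparing them yields Cohen-Macaulayness exactly for $(1,1)$ and $(2,1)$ --- this is the paper's route; and your parenthetical classification of the nonzero $\omega^i$ is slightly imprecise (in Theorem \ref{4.2}, for instance, $\omega^{n+2}(S/J_G)$ is a single Cohen-Macaulay module of dimension $i-1=n+1$, not one of dimension $i$), though harmlessly so, because the depth columns all give $\depth\omega^i(S/J_G)\geq i-1$, which is what (a) asserts.
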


 \begin{proof} By view of Theorems \ref{4.1}, \ref{4.2},  \ref{4.3}, \ref{4.x}  and \ref{4.4} we get the statements on the
 Cohen-Macaulayness of $\omega(S/J_G)$ and the estimates of of the depth of $\omega^i(S/J_G)$
 for all possible bipartite graphs $G$.  By Lemma \ref{3.1} and Corollary \ref{3.3} the claim on the Cohen-Macaulayness of $S/J_G$ is easily seen. Similar arguments work for the sequentially Cohen-Macaulay property
 as it is easily seen by the definition.
  \end{proof}

\section{On the purity of the free resolution}

In the following let $J_r \subset S, r \leq m+n,$ denote the binomial edge ideal corresponding to the
complete graph on $r$ vertices. As a technical tool for our further investigations
we need the following Lemma.

\begin{lemma} \label{5.1} \textrm{(a)} Let $M$ denote a finitely generated graded $S$-module. Let $\underline{f}
= f_1,\ldots,f_l$ denote an $M$-regular sequence of forms of degree 1. Then
\[
\Tor_i^S(K, M/\underline{f}M) \cong \oplus_{j=0}^l\Tor_{i-j}^S(K,M)^{\binom{l}{j}}(-j).
\]
\textrm{(b)} $\Tor_i^S(K,S/J_r) \cong K^{b_i(r)}(-i-1)$ for $i = 1,\ldots,r-1,$ where $b_i(r) = i \binom{r}{i+1}$.
\end{lemma}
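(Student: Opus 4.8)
My plan is to prove the two statements separately, as part (a) is a general Koszul-homology fact while part (b) is a specific computation for the complete-graph binomial edge ideal (equivalently, the ideal of $2\times 2$ minors of a generic $2\times r$ matrix, whose resolution is the Eagon--Northcott complex).

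For part (a), the key observation is that quotienting by a single $M$-regular linear form $f$ is controlled by the short exact sequence
\[
0 \to M(-1) \xrightarrow{\,f\,} M \to M/fM \to 0,
\]
where the shift $(-1)$ records that $f$ has degree $1$. Applying $-\otimes_S K$ and using that $f \in S_+$ acts as zero on $\Tor^S_i(K,M)$, the connecting maps vanish and the long exact sequence splits into short exact sequences
\[
0 \to \Tor_i^S(K,M) \to \Tor_i^S(K,M/fM) \to \Tor_{i-1}^S(K,M)(-1) \to 0.
\]
Since these are sequences of graded $K$-vector spaces they split, giving $\Tor_i^S(K,M/fM)\cong \Tor_i^S(K,M)\oplus\Tor_{i-1}^S(K,M)(-1)$, which is exactly the claimed formula for $l=1$ (the binomial coefficients $\binom{1}{0},\binom{1}{1}$). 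I would then induct on $l$: assuming the formula for $f_1,\dots,f_{l-1}$ and setting $N=M/(f_1,\dots,f_{l-1})M$, the element $f_l$ is $N$-regular, so applying the $l=1$ case to $N$ and invoking Pascal's identity $\binom{l-1}{j}+\binom{l-1}{j-1}=\binom{l}{j}$ assembles the direct-sum decomposition with the correct multiplicities and degree shifts. The shifts all land in degree $-j$ because each application of the regular-sequence step contributes one factor of $(-1)$.

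For part (b), I would start from the known minimal free resolution of $S/J_r$: since $J_r=I(1,\dots,r)$ is the ideal of maximal minors of the generic $2\times r$ matrix, $S/J_r$ is resolved by the Eagon--Northcott complex, which is linear after the first step and has length $r-1$. Reading off the graded Betti numbers of the Eagon--Northcott complex gives $\beta_{i}(S/J_r)=\dim_K \Tor_i^S(K,S/J_r)$ concentrated in a single degree $i+1$ for each $1\le i\le r-1$, with Betti number $i\binom{r}{i+1}$; equivalently one can quote that $S/J_r$ has a pure (indeed linear, up to the standard shift) resolution with these ranks. This yields $\Tor_i^S(K,S/J_r)\cong K^{b_i(r)}(-i-1)$.

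The main obstacle will be pinning down the exact graded Betti numbers $b_i(r)=i\binom{r}{i+1}$ and the single internal degree $i+1$ in part (b), rather than the homological algebra of part (a), which is routine. I expect to handle this either by citing the explicit ranks of the Eagon--Northcott complex for a $2\times r$ matrix, or by a direct induction on $r$ using the short exact sequence relating $J_r$ and $J_{r-1}$ together with part (a) to strip off the extra variable; the purity (concentration in one degree) is what makes the numerics clean, and verifying that the maps in the resolution are genuinely minimal—so that these $\Tor$ ranks are the stated closed form—is the point requiring the most care.
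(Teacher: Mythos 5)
Your proposal is correct and follows essentially the same route as the paper: for (a) the paper uses the very same short exact sequence $0 \to M(-1) \stackrel{f}{\to} M \to M/fM \to 0$ to get $\Tor_i^S(K,M/fM) \cong \Tor_i^S(K,M) \oplus \Tor_{i-1}^S(K,M)(-1)$ and then inducts on $l$, and for (b) it simply cites the well-known linear (Eagon--Northcott) resolution of $S/J_r$. The details you supply beyond the paper (vanishing of the connecting maps since $f \in S_+$ kills $\Tor_i^S(K,M)$, and Pascal's identity in the induction) are exactly the ones the paper leaves implicit.
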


\begin{proof} For the proof of  the statement in (a) let $l = 1$ and $f = f_1$. Then the short exact sequence
$0 \to M(-1) \stackrel{f}{\to} M \to M/fM \to 0$ provides an isomorphism
\[
\Tor_i^S(K,M/fM) \cong \Tor_i^S(K,M) \oplus \Tor_{i-1}^S(K,M)(-1)
\]
for all $i \in \mathbb{Z}$. By an easy induction argument this yields the isomorphisms in (a). The statement
in (b) is well-known since $S/J_r$ has a linear resolution (see e.g. \cite[Exercise A2.19]{Ei}).
\end{proof}

For a certain technical reason we need the following Lemma that
describes the ideals $J_{\tilde{G}} \cap A_n$ respectively $J_{\tilde{G}} \cap B_m$ as binomial edge ideals.

\begin{lemma} \label{5.3} The ideal $J_{\tilde{G}} \cap A_n$ is the binomial edge ideal of the graph $G$ obtained by deleting all edges
$\{i,j\}$ of the complete graph on $[n+m]$ vertices such that
$n < i < j \leq m+n$. Similarly $J_{\tilde{G}} \cap B_m$ is the binomial edge ideal of the graph where all the edges $\{i,j\}$ of the complete graph on $[n+m]$ vertices such that $1 \leq i < j \leq n$ are deleted.
\end{lemma}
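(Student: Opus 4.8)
The plan is to compare the binomial edge ideal of the described graph, which I denote $J_H$, directly against the explicit generators of $J_{\tilde{G}} \cap A_n$ that were already computed in the proof of Lemma~\ref{3.1}(c). The graph $H$ retains exactly those edges $\{i,j\}$ of the complete graph on $[n+m]$ with $i \leq n$; that is, $H$ is the complete graph on $\{1,\ldots,n\}$ together with all edges joining $\{1,\ldots,n\}$ to $\{n+1,\ldots,n+m\}$, and no edges inside $\{n+1,\ldots,n+m\}$. Hence $J_H$ is generated by the minors $x_iy_j-x_jy_i$ with $1 \leq i < j \leq n$ and with $1 \leq i \leq n < j \leq n+m$. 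Taking these generators together shows $J_H = (I(1,\ldots,n,n+i) : i = 1,\ldots,m)$, since each $I(1,\ldots,n,n+i)$ supplies precisely the minors among $\{1,\ldots,n\}$ together with those joining $\{1,\ldots,n\}$ to the single vertex $n+i$.

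Recalling from the proof of Lemma~\ref{3.1}(c) that
\[
J_{\tilde{G}}\cap A_{n} = (I(1,\ldots,n,n+i),\, i = 1,\ldots, m,\; A_n I(n+1,\ldots,n+m)),
\]
the inclusion $J_H \subseteq J_{\tilde{G}}\cap A_n$ is immediate, and the entire content of the lemma is that the extra term $A_n I(n+1,\ldots,n+m)$ is already absorbed into $J_H$. This is the step I expect to be the crux. To handle it I would use the Pl\"ucker-type relation among $2\times 2$ minors: writing $[ab] = x_ay_b-x_by_a$, a direct expansion gives
\[
x_c[ab] = x_b[ac] - x_a[bc] \quad \text{and} \quad y_c[ab] = y_b[ac] - y_a[bc].
\]
Applying these with $a = n+i$, $b = n+j$ (so that $[ab]$ is a generator of $I(n+1,\ldots,n+m)$) and $c = k \leq n$, each product $x_k[n+i,n+j]$ and $y_k[n+i,n+j]$ is rewritten as an $S$-combination of $[n+i,k]$ and $[n+j,k]$. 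Since $k \leq n$, both $\{k,n+i\}$ and $\{k,n+j\}$ are edges of $H$, whence $[n+i,k], [n+j,k] \in J_H$. As $A_n$ is generated by the $x_k, y_k$ with $k \leq n$, this yields $A_n I(n+1,\ldots,n+m) \subseteq J_H$, and therefore $J_{\tilde{G}}\cap A_n = J_H$.

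The second assertion follows by the evident symmetry interchanging the two vertex blocks $\{1,\ldots,n\}$ and $\{n+1,\ldots,n+m\}$: the parallel computation in the proof of Lemma~\ref{3.1}(c) gives $J_{\tilde{G}}\cap B_m = (I(j,n+1,\ldots,n+m),\, j=1,\ldots,n,\; B_m I(1,\ldots,n))$, and the same Pl\"ucker relation, now with $c \in \{n+1,\ldots,n+m\}$ absorbing the generators of $I(1,\ldots,n)$, shows this equals the edge ideal of the graph in which the edges inside $\{1,\ldots,n\}$ have been deleted. The only genuine obstacle is verifying this absorption of the extra term, which reduces entirely to the displayed minor identities.
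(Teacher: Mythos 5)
Your argument is correct, and it takes a genuinely different route from the paper's. The paper proves Lemma \ref{5.3} exactly the way it proved Lemma \ref{3.1}(a): it applies the criterion of Lemma \ref{2.2} to the graph $H$ obtained by deleting the edges inside $\{n+1,\ldots,n+m\}$, checks that the only sets $T$ with $c(T\setminus\{i\})<c(T)$ for all $i\in T$ are $T=\emptyset$ and $T=\{1,\ldots,n\}$, whose associated primes are $P_\emptyset(H)=J_{\tilde{G}}$ and $P_{\{1,\ldots,n\}}(H)=A_n$, and reads off $J_H=J_{\tilde{G}}\cap A_n$ from the primary decomposition. You instead work entirely at the level of generators: you recall the description
\[
J_{\tilde{G}}\cap A_{n} = (I(1,\ldots,n,n+i),\; i = 1,\ldots, m,\; A_n I(n+1,\ldots,n+m))
\]
from the proof of Lemma \ref{3.1}(c), and then absorb the term $A_n I(n+1,\ldots,n+m)$ into $J_H$ using the relations $x_c[ab]=x_b[ac]-x_a[bc]$ and $y_c[ab]=y_b[ac]-y_a[bc]$, which are correct (direct expansion confirms them) and do exactly what is needed: for $a=n+i$, $b=n+j$, $c=k\le n$ the brackets $[n+i,k]$ and $[n+j,k]$ are generators of $J_H$, so $A_nI(n+1,\ldots,n+m)\subseteq J_H$ and the two ideals coincide. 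What the paper's route buys is uniformity: it reuses the combinatorial machinery of Section 3 and never touches syzygies among the minors, though it leaves the verification that no other $T$ qualifies as an assertion for the reader. What your route buys is explicitness: it is elementary, bypasses the minimal-prime combinatorics entirely, and exhibits $J_{\tilde{G}}\cap A_n$ as generated by the quadrics $[ij]$ with $i\le n$ --- which is precisely the consequence of Lemma \ref{5.3} actually invoked later, in Theorem \ref{5.2}, to conclude $\beta_{1,2}(S/J_{\tilde{G}}\cap A_n)=0$. One mild caveat: you rely on a displayed formula that lives inside the proof of Lemma \ref{3.1}(c) (itself resting on Proposition \ref{2.3} and a modular-law step the paper states without proof) rather than on a formally stated result; since that formula precedes Lemma \ref{5.3} and no circularity arises, this is legitimate, but your proof is only as self-contained as that earlier argument.
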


\begin{proof} Let us consider the ideal $J_{\tilde{G}} \cap A_n$. Look at the primary decomposition of the graph $G$. We have to find
all $\emptyset \not= T \subset [n+m]$ such that $c(T \setminus \{i\})
< c(T)$ for all $i \in T$. If $T = \{1,\ldots,n\}$, then $c(T) = m > 1$ and
$c(T \setminus \{i\}) = 1$ for all $i$. Let $T \subset [n+m]$ denote a subset with $T \not=
\{1,\ldots,n\}$. Then it is easy to see that the condition
$c(T \setminus \{i\})<c(T)$ for all $i \in T$ can not be satisfied. So the
claim follows by Lemma \ref{2.2}. A similar consideration proves the
case of $J_{\tilde{G}} \cap B_m$.
\end{proof}

As usual we define $\beta_{i,j}(M) = \dim_K \Tor_i^S(K,M)_{i+j}, i,j \in \mathbb{Z},$ the graded Betti
numbers of $M$, a finitely generated $S$-module. Then $\reg M = \max \{j \in \mathbb{Z} | \beta_{i,j}(M) \not= 0\}$.
In the following we shall prove that $S/J_G$ has a pure resolution. Note that all the $\beta_{i,j}(S/J_G)$ outside
of the Betti table are zero.

\begin{theorem} \label{5.2} Let $S/J_G$ denote the binomial edge ideal of the complete bipartite
graph $K_{m,n}$. Then the Betti diagram has the following form
\[
\begin{array}{c|ccccc}
   & 0 & 1 & 2 & \cdots & p \\
\hline
0 & 1 & 0 & 0 & \cdots & 0 \\
1 & 0 & mn & 0 & \cdots & 0 \\
2 & 0 & 0 & \beta_{2,2} & \cdots & \beta_{p,2}
\end{array}
\]
where\begin{eqnarray*}
p=\left\{\begin{array}{ll}
 m, & \hbox{if \, $n=1$ ;} \\
2m+n-2, & \hbox{if \, $m\geq n>1.$}
 \end{array}\right.
\end{eqnarray*}
\end{theorem}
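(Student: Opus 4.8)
The plan is to reduce the purity assertion to a single statement — that $J_G$ has no linear first syzygies — verify that statement by a direct computation in the fine grading, and read off the projective dimension $p$ from Auslander--Buchsbaum. First I would pin down the gross shape of the table. Since $J_G$ is minimally generated by the $mn$ quadrics $x_iy_j-x_jy_i$, one per edge of $K_{m,n}$, and these are $K$-linearly independent, we have $\beta_{0,0}=1$, $\beta_{1,1}=mn$ and $\beta_{1,j}=0$ for $j\neq 1$. By Corollary \ref{3.3} we know $\reg S/J_G\leq 2$, so (using $\reg M=\max\{j:\beta_{i,j}(M)\neq0\}$) all rows $j\geq 3$ vanish; and because the resolution is minimal with $F_1$ generated in degree $2$, minimality forces the generators of $F_i$ into degrees $\geq i+1$, whence $\beta_{i,0}=0$ for $i\geq 1$. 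Thus the whole table sits in rows $0,1,2$, with row $0$ reduced to $\beta_{0,0}=1$, and purity is equivalent to the vanishing of the higher linear strand, $\beta_{i,1}(S/J_G)=0$ for all $i\geq 2$. The length is immediate: Auslander--Buchsbaum together with the depth from Corollary \ref{3.3} gives $\operatorname{pd} S/J_G=2(n+m)-\depth S/J_G$, equal to $m$ if $n=1$ and to $2m+n-2$ if $m\geq n>1$, which is exactly the claimed $p$; once purity is known the terminal module $F_p$ lies in row $2$.

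Next I would note that, the table living in rows $1,2$ only in homological degrees $\geq 1$, the whole higher linear strand vanishes as soon as $\beta_{2,1}=0$. Indeed, suppose $\beta_{k,1}=0$ for some $k\geq 2$; then $F_k=S(-(k+2))^{\beta_{k,2}}$, so any row-$1$ generator of $F_{k+1}$, which sits in degree $k+2$, maps into $F_k$ by scalars, hence by $0$ by minimality, and therefore lies in $\operatorname{im}d_{k+2}\subseteq\mathfrak m F_{k+1}$ — impossible for a minimal generator. Hence $\beta_{k+1,1}=0$, and induction reduces everything to the single case $\beta_{2,1}(S/J_G)=0$.

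The heart of the matter is then $\beta_{2,1}(S/J_G)=\dim_K(\operatorname{Syz}_1)_3$, the space of linear syzygies $\sum_{\{i,j\}\in E}\ell_{ij}(x_iy_j-x_jy_i)=0$ with $\ell_{ij}\in S_1$. I would exploit the fine $\mathbb{Z}^{n+m}$-grading $\deg x_k=\deg y_k=e_k$, under which the relation splits into independent equations indexed by the multidegrees $\alpha$ with total degree $3$. For $\alpha=2e_i+e_j$ only the $e_i$-component of $\ell_{ij}$ can contribute, and expanding $(ax_i+by_i)(x_iy_j-x_jy_i)$ into four distinct monomials forces $a=b=0$; hence each $\ell_{ij}$ has no $x_i,y_i,x_j,y_j$ term. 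For $\alpha=e_i+e_j+e_k$ with $i,j,k$ distinct the contributing terms are indexed by those of the pairs $\{i,j\},\{i,k\},\{j,k\}$ that are edges. Since $K_{m,n}$ is bipartite it is triangle-free, so at most two of these pairs are edges, and in either case expanding into the relevant distinct monomials again forces all coefficients to vanish. Therefore every $\ell_{ij}=0$ and $\beta_{2,1}=0$.

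I expect this triangle-free computation to be the main — indeed the only genuine — obstacle, everything else being the bookkeeping above. The dependence on bipartiteness enters at exactly one point: a triangle $\{i,j,k\}$ would reintroduce the Eagon--Northcott linear syzygy $x_ig_{jk}-x_jg_{ik}+x_kg_{ij}=0$ (with $g_{ab}=x_ay_b-x_by_a$), producing a nonzero element of $(\operatorname{Syz}_1)_3$; the absence of triangles is precisely what makes the multidegree equations nondegenerate. An alternative route, more in the spirit of Lemmas \ref{5.1} and \ref{5.3}, would be to compute the full strand through the exact sequences of Corollary \ref{3.2}, tracking the connecting maps on linear strands; but controlling those maps is more delicate than the direct argument, so I would favour the computation above and use the exact sequences only if explicit values of $\beta_{i,2}$ were wanted.
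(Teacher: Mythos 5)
Your proposal is correct, and its skeleton agrees with the paper's: both arguments pin down the shape of the table from $\reg S/J_G=2$ and the minimal generation by $mn$ quadrics, obtain the length $p$ from the depth values of Corollary \ref{3.3} via Auslander--Buchsbaum, and reduce purity to the single vanishing $\beta_{2,1}(S/J_G)=0$ by the standard minimality induction. Where you genuinely diverge is in proving that vanishing. The paper never looks at syzygies directly: it runs the three short exact sequences of Corollary \ref{3.2} through $\Tor$, uses Lemma \ref{5.3} to know that $J_{\tilde{G}}\cap A_n$ and $J_{\tilde{G}}\cap B_m$ are generated by quadrics, computes $\beta_{2,1}(S/J_{\tilde{G}}\cap A_n)=b_2(m+n)-b_2(m)-2nb_1(m)$ (and symmetrically for $B_m$) from Lemma \ref{5.1}, and then checks that the resulting expression for $\beta_{2,1}(S/J_G)$ collapses to zero --- in essence a Vandermonde identity among binomial coefficients. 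You instead identify $\beta_{2,1}(S/J_G)$ with the space of linear syzygies $\sum \ell_{ij}(x_iy_j-x_jy_i)=0$ and kill it by a direct computation in the fine $\mathbb{Z}^{m+n}$-grading, with triangle-freeness of the bipartite graph guaranteeing that each multidegree $e_i+e_j+e_k$ involves at most two edges, so all coefficients are forced to vanish; I checked the two-edge case and the six monomials are indeed distinct, so the argument is sound. Your route is more elementary and more illuminating: it proves the stronger statement that \emph{any} triangle-free graph admits no linear first syzygies of $J_G$, and it isolates exactly where bipartiteness enters (a triangle would reintroduce the Eagon--Northcott syzygy). What the paper's heavier bookkeeping buys is reusability: the exact sequences of Corollary \ref{3.2} and the formulas of Lemma \ref{5.1} are needed anyway in the following theorem to extract the Hilbert series and the explicit values of $\beta_{i,2}$, so the paper's proof of purity comes almost for free from machinery it must develop regardless, whereas your computation settles purity alone. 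One small caveat on your write-up: your induction killing $\beta_{i,1}$ for $i>2$ uses that the table has only rows $1$ and $2$, hence uses $\reg S/J_G\leq 2$; this is available here by Corollary \ref{3.3}, but it means your method does not by itself give purity for general triangle-free graphs, only the vanishing $\beta_{2,1}=0$.
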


\begin{proof} Because of the regularity and depth of $S/J_G$, the non-vanishing
part of the Betti table is concentrated in the frame of the one given in the statement.
Clearly $\beta_{0,0} = 1$ and $\beta_{i,0} = 0$ for all $i > 0$. Furthermore $\beta_{1,0} = \beta_{2,0} =0$.
Since $J_G$ is minimally generated by $mn$ binomials we get that $\beta_{1,1} = mn$ and $\beta_{1,2} = 0$.

In order to prove the statement we have to show that $\beta_{2,1} = 0$ because this implies that
$\beta_{i,1} = 0$ for all $i \geq 2$ as a consequence of the minimality of the free resolution.
Here we have two cases:

\textbf{Case(a):} Let $m\geq n>1$. We take the short exact sequence (1) of Corollary \ref{3.2}. It induces
a graded homomorphism of degree zero
\[
\Tor_2^S(K,S/J_{\tilde{G}}) = K^{b_2(m+n)}(-3) \to \Tor_1^S(K,S/J_G) = K^{mn}(-2).
\]
Therefore it is the zero homomorphism. On the other side it induces a homomorphism
\[
\Tor_3^S(K,S/J_{\tilde{G}}) = K^{b_3(m+n)}(-4) \to
\Tor_2^S(K,S/J_G),
\]
which is the zero homomorphism when restricted to degree 3 since
$\reg S/J_G = 2$. Therefore there is a short exact sequence
of $K$-vector spaces
\[
0 \to \Tor_2^S(K,S/J_G)_3 \to \Tor_2^S(K,S/J_{\tilde{G}}\cap A_n)_3 \oplus
\Tor_2^S(K,S/J_{\tilde{G}} \cap B_m)_3 \to K^{b_2(m+n)} \to 0.
\]
That is $\beta_{2,1}(S/J_G) = \beta_{2,1}(S/J_{\tilde{G}}\cap A_n) + \beta_{2,1}(S/J_{\tilde{G}}\cap B_m) -
b_2(m+n)$.

In the next step we shall compute $\beta_{2,1}(S/J_{\tilde{G}}\cap A_n)$ and
$\beta_{2,1}(S/J_{\tilde{G}}\cap B_m)$. We start with the first of them. To this end we
use the short exact sequence (2) of Corollary \ref{3.2}.
At first we note that
$\beta_{1,2}(S/J_{\tilde{G}}\cap A_n) = 0$ which is true since $J_{\tilde{G}}\cap A_n$ is minimally
generated by quadrics as follows by Lemma \ref{5.3}.
Because of
\[
\beta_{3,0}(S/J_{\tilde{G}}\cap A_n) = \beta_{3,0}(S/J_{\tilde{G}}) = \beta_{2,1}(S/A_n) = 0
\]
we get the following exact sequence of $K$-vector spaces.
\begin{gather*}
0 \to \Tor_3^S(K,S/A_n)_3 \to \Tor_3^S(K,S/(J_{\tilde{G}},A_n))_3 \to \Tor_2^S(K,S/J_{\tilde{G}}\cap A_n)_3 \to \\
\Tor_2^S(K,S/J_{\tilde{G}})_3 \to \Tor_2^S(K,S/(J_{\tilde{G}},A_n))_3 \to 0.
\end{gather*}
By counting vector space dimensions this provides that
\[
\beta_{2,1}(S/J_{\tilde{G}}\cap A_n) = b_2(m+n) + \beta_{3,0}(S/(J_{\tilde{G}},A_n)) -
\beta_{2,1}(S/(J_{\tilde{G}},A_n)) - \textstyle{\binom{2n}{3}}.
\]
Since $(J_{\tilde{G}},A_n) = (J_m,A_n)$, where $J_m = I(n+1,\ldots,n+m)$ we might use Lemma
\ref{5.1} for the calculation of these dimensions. Therefore $\Tor_3^S(K,S/(J_{\tilde{G}},A_n))_3 \cong \Tor_0^S(K,S/J_m)_0^{\binom{2n}{3}}$
and
\[
\Tor_2^S(K,S/(J_{\tilde{G}},A_n))_3 \cong \Tor_2^S(K,S/J_m)_3 \oplus \Tor_1^S(K,S/J_m)_2^{\binom{2n}{1}}.
\]
Therefore $\beta_{3,0}(S/(J_{\tilde{G}},A_n)) = \binom{2n}{3}$ and $\beta_{2,1}(S/(J_{\tilde{G}},A_n)) =
b_2(m) +2n b_1(m)$. Putting these integers together it follows that
\[
\beta_{2,1}(S/J_{\tilde{G}}\cap A_n) = b_2(m+n)-b_2(m)-2n b_1(m).
\]
Interchanging the r\^oles of $m$ and $n$ we derive a corresponding formula for $\beta_{2,1}(S/J_{\tilde{G}} \cap
B_m))$, namely
\[
\beta_{2,1}(S/J_{\tilde{G}}\cap B_m) = b_2(m+n)-b_2(n)-2m b_1(n)
\]
Finally we use both expressions in the above formula in order to confirm that $\beta_{2,1}(S/J_G) $ vanishes.

  \textbf{Case(b):}  Let $m>n=1$. Because $J_{\tilde{G}}\cap A_1=J_G$ we might use exact sequence (2) of
  Corollary \ref{3.2}. Since $(J_{\tilde{G}},A_1) = (I(2,\ldots,n+m),A_1)$ the statements in Lemma \ref{5.1}
  imply that $\Tor_3^S(K,S/(J_{\tilde{G}},A_1))_3 = 0$. Whence there is an exact sequence of $K$-vector spaces
 \[
 0 \to \Tor_2^S(K,S/J_G)_3 \to \Tor_2^S(K,S/J_{\tilde{G}})_3 \oplus \Tor_2^S(K,S/A_1)_3 \to
 \Tor_2^S(K,S/(J_{\tilde{G}},A_1))_3 \to 0.
 \]
 Therefore $\beta_{2,1}(S/J_G) = b_2(m+1) -
\beta_{2,1}(S/(J_{\tilde{G}},A_1))$. Again by the statement of Lemma \ref{5.1} (a) it follows that  $\beta_{2,1}(S/(J_{\tilde{G}},A_1))=b_2(m)+2 b_1(m).$ Finally \[ \beta_{2,1}(S/J_G) = b_2(m+1) -b_2(m)-2 b_1(m)=0,\]
as required.
\end{proof}

As a final feature of the investigations we will describe the explicit values of the Betti numbers
$\beta_{2,i}(S/J_G), 2 \leq i \leq p,$ as they are indicated in Theorem \ref{5.2}.

\begin{theorem} Let $G= K_{m,n}$ denote the complete bipartite graph with $m \geq n \geq 1$.
\begin{itemize}
\item[(a)] The Hilbert function of $S/J_G$ is given by
\begin{gather*} H(S/J_G,t) =
\textstyle{\frac{1}{(1-t)^{m+n+1}}(1+(m+n-1)t) +\frac{1}{(1-t)^{2m}} +\frac{1}{(1-t)^{2n}}} \\
\textstyle{-\frac{1}{(1-t)^{m+1}}(1+(m-1)t)  -\frac{1}{(1-t)^{n+1}}(1+(n-1)t) }.
\end{gather*}
\item[(b)] For the multiplicity $e(S/J_G)$ it follows
\[
e(S/J_G) = \begin{cases} 1, & \textrm{ if  } m > n+1 \textrm{ or } n=1 \textrm{ and } m>2,\\
                                     2m,  & \textrm{ otherwise. }
                 \end{cases}
\]
\item[(c)] Let $n= 1$. Then $\beta_{i,2}(S/J_G) = m\binom{m}{i} - \binom{m}{i+1} - \binom{m+1}{i+1}$ for all
$2 \leq i \leq p = m$. Let $m \geq n > 1$. Then
\begin{gather*}
\beta_{i,2}(S/J_G) = \textstyle{\binom{m+n}{i+2} +\binom{2n}{i+2} + \binom{2m}{i+2} +m \binom{m+2n-1}{i+1} +n \binom{2m+n-1}{i+1}} \\
\textstyle{ - \binom{m+2n}{i+2} - \binom{2m+n}{i+2} - (m+n) \binom{m+n-1}{i+1}  }
\end{gather*}
for all $2 \leq i \leq p = 2m+n-2$.
\end{itemize}
\end{theorem}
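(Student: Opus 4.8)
The plan is to derive all three parts from the Hilbert series, exploiting the primary decomposition of Lemma \ref{3.1} and the purity of the resolution established in Theorem \ref{5.2}. I would treat (a) first, since (b) and (c) are both extracted from the resulting formula.

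For part (a) I would use that the Hilbert series is additive on short exact sequences. Applying this to the three sequences of Corollary \ref{3.2} and eliminating the two intersection terms $S/(J_{\tilde G}\cap A_n)$ and $S/(J_{\tilde G}\cap B_m)$ yields the inclusion--exclusion identity
\[
H(S/J_G,t)=H(S/J_{\tilde G},t)+H(S/A_n,t)+H(S/B_m,t)-H(S/(J_{\tilde G},A_n),t)-H(S/(J_{\tilde G},B_m),t).
\]
It then remains to substitute the Hilbert series of the five modules on the right, all of which are Cohen--Macaulay. The rings $S/A_n$ and $S/B_m$ are polynomial rings in $2m$ and $2n$ variables, while $S/J_{\tilde G}$, $S/(J_{\tilde G},A_n)=S/(J_m,A_n)$ and $S/(J_{\tilde G},B_m)$ are coordinate rings of Segre products $\mathbb P^1\times\mathbb P^{r-1}$ and hence have Hilbert series of the shape $(1+(r-1)t)/(1-t)^{r+1}$. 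Collecting these gives exactly the claimed formula; for $n=1$ one checks that the spurious contributions of $S/B_m$ and $S/(J_{\tilde G},B_m)$ cancel, so the same expression remains valid.

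For part (b) I would read off the multiplicity as the numerator-at-$t=1$ of the summands of $H(S/J_G,t)$ of maximal pole order $d=\dim S/J_G=\max\{m+n+1,2m\}$ (Lemma \ref{3.1}). When $m>n+1$ the unique dominant summand is $1/(1-t)^{2m}$, giving $e=1$; when $m=n$ the dominant summand is the Segre term, giving $e=m+n=2m$; and in the borderline case $m=n+1$ both the Segre term and $1/(1-t)^{2m}$ have pole order $2m$, so their leading values add to $(m+n)+1=2m$. The star cases ($n=1$) are handled the same way after the cancellation noted in (a).

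Part (c) is the main point, and the key observation is that purity lets one read each individual $\beta_{i,2}$ off the numerator of the Hilbert series. Writing $K(t)=(1-t)^{2(m+n)}H(S/J_G,t)$ for the $K$-polynomial and using the Betti table of Theorem \ref{5.2}, one has
\[
K(t)=1-mn\,t^2+\sum_{i=2}^{p}(-1)^i\beta_{i,2}(S/J_G)\,t^{i+2}.
\]
Because the resolution is concentrated in rows $0,1,2$, for every total degree $i+2\geq 4$ the only syzygies occur in homological position $i$, so the coefficient of $t^{i+2}$ in $K(t)$ equals $(-1)^i\beta_{i,2}$ with no cancellation. I would therefore expand $K(t)$ from the formula in (a): each summand becomes a product $(1+at)(1-t)^b$ or a pure power $(1-t)^b$, whose coefficient of $t^{i+2}$ is $(-1)^i[\binom{b}{i+2}-a\binom{b}{i+1}]$. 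Multiplying through by $(-1)^i$ and summing the five contributions produces the closed expression for $\beta_{i,2}$, and a single application of Pascal's identity $\binom{b}{i+2}=\binom{b+1}{i+2}-\binom{b}{i+1}$ in each of the three non-polynomial summands rewrites it in the stated form; the $n=1$ case collapses to $m\binom{m}{i}-\binom{m}{i+1}-\binom{m+1}{i+1}$. The only real work is this bookkeeping of signs and binomial identities; the conceptual content lies entirely in the purity from Theorem \ref{5.2}, which guarantees that the alternating sum, taken degree by degree, isolates a single Betti number.
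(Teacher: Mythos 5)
Your proposal is correct and follows essentially the same route as the paper: part (a) by additivity of Hilbert series over the exact sequences of Corollary \ref{3.2} and substitution of the known Segre and polynomial-ring series, part (b) by reading off the leading terms, and part (c) by writing $H(S/J_G,t)$ via the pure resolution of Theorem \ref{5.2} and comparing numerators with (a). Your explicit remark that purity forces each coefficient of the $K$-polynomial in degree $i+2\geq 4$ to equal $(-1)^i\beta_{i,2}$ with no cancellation, together with the Pascal-identity bookkeeping, is exactly the content of the paper's ``nasty calculations,'' only spelled out more carefully.
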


\begin{proof} In order to prove (a) we use the short exact sequences of Corollary \ref{3.2}. By the additivity
of the Hilbert series we get the following equalities:
\begin{itemize}
\item[]  $H(S/J_G,t) = H(S/J_{\tilde{G}}\cap A_n,t) + H(S/J_{\tilde{G}}\cap B_n,t) - H(S/J_{\tilde{G}},t),$
\item[] $H(S/J_{\tilde{G}}\cap A_n,t) = H(S/J_{\tilde{G}},t) +H(S/A_n,t) - H(S/(J_{\tilde{G}} A_n),t),$ and
\item[] $H(S/J_{\tilde{G}}\cap B_m,t) = H(S/J_{\tilde{G}},t) +H(S/B_m,t) - H(S/(J_{\tilde{G}} B_m),t)$.
\end{itemize}
Substituting the Hilbert series of the complete graphs $S/J_{\tilde{G}}, S/(J_{\tilde{G}},A_n)$ and $S/(J_{\tilde{G}},B_m)$
as well as the Hilbert series of the polynomial rings $S/A_n, S/B_m$ we get the desired formula in (a). Then (b)
is an easy consequence of (a).

For the proof of (c) we note at first the structure of the finite free resolution of $S/J_G$
\[
0 \to S^{\beta_p}(-p-2) \to \cdots \to S^{\beta_3}(-5) \to S^{\beta_2}(-4) \to S^{\beta_1}(-2) \to S
\]
with $\beta_1 =mn$ $\beta_i = \beta_{i,2}(S/J_G), 2 \leq i \leq p,$ as shown in Theorem \ref{5.2}. By the additivity of the
Hilbert series this provides the following expression
\[
H(S/J_G,t) = \frac{1}{(1-t)^{2m+2n}}(1-\beta_1 t^2+\sum_{i=2}^p (-1)^i\beta_i t^{i+2})
\]
(see also \cite[Exercise 19.14]{Ei}).
Now we use the expression of the Hilbert series $H(S/J_G,t)$ as shown in (a) and compare it with
the one of the minimal free resolution. By some nasty calculations we derive the formulas for
the Betti numbers as given in the statement.
\end{proof}

\textbf{Acknowledgement.} The authors are grateful to Prof. J\"urgen Herzog for drawing their attention
to binomial edge ideals and suggesting their investigation.

\end{document}